\numberwithin{equation}{section} 
\newcommand{\bea}{\begin{eqnarray}}
\newcommand{\eea}{\end{eqnarray}}
\newcommand{\ba}{\begin{array}}
\newcommand{\ea}{\end{array}}
\newcommand{\edc}{\end{document}}
\newcommand{\bc}{\begin{center}}
\newcommand{\ec}{\end{center}}
\newcommand{\be}{\begin{equation}}
\newcommand{\ee}{\end{equation}}
\def\cf{{\mathcal F}}
\def\bc{{\mathbb C}}
\def\bn{{\mathbb N}}
\def\bq{{\mathbb Q}}
\def\br{{\mathbb R}}
\def\bz{{\mathbb Z}}
\def\a{\alpha}
\def\b{\beta}
\def\g{\gamma}  
\def\d{\delta}
\newtheorem{thm}{Theorem}[section]
\newtheorem{lem}[thm]{Lemma}
\theoremstyle{remark}
\newtheorem{rem}{Remark}[section]
\newtheorem{ex}{Example}[section]
\begin{document}

\title{On unconventional limit sets of contractive functions on $\mathbb Z_p$}

\author{Farrukh Mukhamedov}
\address{Mukhamedov Farrukh\\
 Department of Computational \& Theoretical Sciences\\
Faculty of Science, International Islamic University Malaysia\\
P.O. Box, 141, 25710, Kuantan\\
Pahang, Malaysia} \email{{\tt far75m@yandex.ru} {\tt farrukh\_m@iium.edu.my}}

\author{Otabek Khakimov}
\address{Otabek Khakimov\\
Institute of mathematics, National University of Uzbekistan, 29,
Do'rmon Yo'li str., 100125, Tashkent, Uzbekistan.} \email {{\tt
hakimovo@mail.ru}}

\begin{abstract}
In the present paper, we are going to study metric properties of
unconventional limit set of a semigroup $G$ generated by
contractive functions $\{f_{i}\}_{i=1}^N$ on the unit ball
$\mathbb Z_p$ of $p$-adic numbers. Namely, we prove that the
unconventional limit set is compact, perfect and uniformly
disconnected.  Moreover, we provide an example of two contractions
for which the corresponding unconventional limiting set is
quasi-symmetrically equivalent to the symbolic Cantor set.

\vskip 0.3cm \noindent {\it
Mathematics Subject Classification}: 46S10, 12J12, 39A70, 47H10, 60K35.\\
{\it Key words}: $p$-adic numbers, unconventional limit set;
compact; uniformly perfect; quasi-symmetric; symbolic Cantor set.
\end{abstract}

\maketitle

\section{introduction}

Non-Archimedean dynamical systems is one of the most popular areas
of the modern mathematics. There are many works devoted to
$p$-adic dynamics \cite{An,B1,B2,X1,X2,GKL,KM2,KhN,FM2,RS,Wo}. On
the other hand, in investigations of random mappings it has been
studied the metric properties of limit sets in the Euclidean
spaces (see for example \cite{Bl,BP,Hut,JV}). These investigations
have found their applications in the fractal geometry
\cite{Fal,GS}. This naturally motivates to consider the metric
properties of limit sets in a non-Archimedean setting. In this
direction, very recently, in \cite{X} it has been considered a
semigroup $G$ generated by a finite set $\{f_i\}_{i=1}^N$ of
contractive functions on $\mathcal O$ (here  $\mathcal
O=\{x\in\mathbb K: |x|\leq1\}$ is the closed unit ball of the
non-Archimedean field $\mathbb K$). Namely,
$G=\bigcup\limits_{k\geq1}G_k$, where $G_k=\{f_{i_1}\circ
f_{i_2}\circ\dots\circ f_{i_k}, 1\leq i_j\leq N, 1\leq j\leq k\}$.
Furthermore, it was studied matric properties of the limit set
$\Lambda$ of $G$ which is a complement of the set of all points
$x\in\mathcal O$ for which there exist open neighborhoods $U_x$ of
$x$ such that $g(U_x)\cap U_x=\varnothing$ for all but finitely
many $g\in G$. Note that the limit set $\Lambda$ of $G$ is a very
important object in the study of random dynamical systems (see for
example \cite{X2,Sil1,Wo}).

It is known that that the composition of two contractive mappings
is also contraction. But, in general, the product or the sum of
such kind of contractions is contraction. But in a non-Archimedean
case they are also contractions. Using that fact in \cite{MA2015}
it has been studied the uniqueness limiting set of unconventional
iterates of contractive mappings. Note that these results play an
important role in the theory of $p$-adic Gibbs measures
\cite{GRR},\cite{FM1}-\cite{M2013}.

In the present paper, we are going to study metric properties of
unconventional limit set of contractive function on $\mathbb Z_p$.
Namely, we prove that the unconventional limit set is compact,
perfect and uniformly disconnected. Moreover, we provide an
example of two contractions for which the corresponding
unconventional limiting set is quasi-symmetrically equivalent to
the symbolic Cantor set. This paper can be considered as a
generalization and extension of some results of \cite{X}.

\section{definitions and preliminary results}

A metric space $X$ is said to be {\it doubling} if there is a
constant $k$ such that every disk $B$ in $X$ can be covered with
at most $k$ disks of half the radius of $B$. A number of metric
spaces have this property, e.g. the Euclidean space, a compact
Riemann surface, etc. However a non-Archimedean space is not
necessary a doubling space, e.g. $\mathbb C_p$. Hence it is very
important to know whether a subspace of a non-Archimedean space is
a doubling space. Note that $\mathbb Q_p$ is a doubling. Let
$(X,d)$ be a complete metric space. Let $\mathcal B$ be the
collection of all bounded subsets of $X$. For a set $E\in\mathcal
B$, we denote the diameter of $E$ by $\mbox{diam}(E)=\sup_{z,w\in
E}d(z,w)$. By definition, a set $E\in\mathcal B$ is called a {\it
uniformly perfect set} if $E$ contains at least two points and
there exists a constant $c>0$ such that for any point $x_0\in E$
and $0 <r<\mbox{diam}(E)$, the annulus $\{x\in X: cr\leq
d(x,x_0)\leq r\}$ meets $E$. We say that a metric space $(X,d)$ is
{\it uniformly disconnected} if there is a constant $C>1$ so that
for each $x\in X$ and $r>0$ we can find a closed subset $A$ of $X$
such that $B_{r/C}(x)\subset A\subset B_r(x)$, and
$\mbox{dist}(A,X\setminus A)\geq C^{-1}r$. Let $(X,d_X)$, $(Y,d_Y
)$ be metric spaces. A mapping $f :X\to Y$ is said to be {\it
quasi-symmetric} if it is not constant and if there exists a
homeomorphism $\eta:[0,+\infty)\to[0,+\infty)$ such that for any
points $x,y,z\in X$ and $t>0$ we have that $d_X(x,y)\leq
td_X(x,z)$ implies that $d_Y(f(x),f(y))\leq\eta(t)d_Y(f(x),f(z))$.
Two metric spaces are said to be {\it quasi-symmetrically
equivalent} if there is a quasi-symmetric mapping from one to the
other.

Let $F =\{0,1\}$ and $F^\infty$ denote the set of sequences $\{x_i\}_{i=1}^\infty$ with $x_i\in F$ for each $i$.
Given $x =\{x_i\}$ and
$y=\{y_i\}$ in $F^\infty$, let $L(x,y)$ be the largest integer $l$ such that $x_i=y_i$ when $1\leq i\leq l$,
and set $L(x,y)=\infty$
when $x=y$. Take $0<a<1$ and set $d_a(x,y)=a^{L(x,y)}$. Then $F^\infty$ endowed with the metric $d_a(x,y)$ is a
standard symbolic Cantor set. The space $Z_2$ of $2$-adic integers can be regarded as the standard symbolic
Cantor set $F^\infty$ endowed with the metric $d_{1/2}(x, y)$.

\begin{thm}\label{qseC}\cite{GS} Suppose that $(X,d)$ is a compact metric space which is bounded, complete, doubling,
uniformly disconnected, and uniformly perfect. Then $X$ is
quasi-symmetrically equivalent to the symbolic Cantor set
$F^\infty$, where we take $F =\{0,1\}$ with the metric
$d_a(x,y)=a^{L(x,y)}$ where $a=1/2$.
\end{thm}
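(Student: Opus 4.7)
The plan is to build a nested sequence of finite clopen partitions of $X$ with geometric control on diameters and separations, organize them into a binary tree, and encode points of $X$ as infinite binary sequences in $F^\infty$; the encoding map will be quasi-symmetric by a direct H\"older-type comparison of the two metrics.

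First I would apply uniform disconnectedness iteratively: at each scale $r_n=\lambda^n\,\mathrm{diam}(X)$ for a fixed $\lambda\in(0,1)$ chosen from the constant $C$ in the definition, extract for each point a clopen set trapped between the ball of radius $r_n/C$ and the ball of radius $r_n$ with separation $\geq r_n/C$ from its complement, and take the common refinement over a finite net to produce a clopen partition $\mathcal{P}_n$ with $\mathrm{diam}(P)\leq C_1\lambda^n$ for every $P\in\mathcal{P}_n$ and $\mathrm{dist}(P,P')\geq C_2\lambda^n$ for distinct $P,P'\in\mathcal{P}_n$. Compactness together with the doubling property forces $\mathcal{P}_n$ to be finite with $|\mathcal{P}_n|$ polynomially bounded in $\lambda^{-n}$. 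Uniform perfectness then guarantees that every piece of positive diameter genuinely splits into at least two children at some deeper level, so that the nested intersection of pieces containing a given point $x$ shrinks to $\{x\}$ and no piece degenerates.

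Next I would reorganize the partitions into a tree with exactly binary branching. Since doubling caps the branching ratio $|\mathcal{P}_{n+1}|/|\mathcal{P}_n|$ by an absolute constant $k$, I can interpolate $\lceil\log_2 k\rceil$ intermediate levels between $\mathcal{P}_n$ and $\mathcal{P}_{n+1}$ by iteratively pairing siblings, at the cost of replacing $\lambda$ by some $\lambda'\in(\lambda,1)$ in the estimates above. Define $\Phi\colon X\to F^\infty$ by $\Phi(x)=(\varepsilon_n(x))_{n\geq 1}$, where $\varepsilon_n(x)\in\{0,1\}$ labels which of the two level-$n$ children contains $x$; this is a homeomorphism onto $F^\infty$. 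When $L(\Phi(x),\Phi(y))=n$ the construction forces $x,y$ to lie in a common piece of the level-$n$ partition but in distinct children, giving $C_2'(\lambda')^{n+1}\leq d(x,y)\leq C_1'(\lambda')^{n}$, while $d_{1/2}(\Phi(x),\Phi(y))=2^{-n}$. This yields a two-sided power-law comparison $d_{1/2}(\Phi(x),\Phi(y))\asymp d(x,y)^{\alpha}$ with $\alpha=\log 2/\log(1/\lambda')$, which immediately implies quasi-symmetry with $\eta(t)=K t^{\alpha}$ for a suitable constant $K$.

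The main obstacle is the binary-tree reorganization while preserving both the diameter and the separation estimates: if one naively lumps together siblings, a fat piece can emerge whose diameter rivals its distance to a sibling, breaking the comparison that drives the quasi-symmetry bound. The remedy is to choose the initial $\lambda$ small enough relative to both the doubling constant $k$ and the disconnectedness constant $C$, so that any bounded amount of regrouping costs only a fixed multiplicative factor which can be absorbed into the constants $C_1'$, $C_2'$, and $\alpha$ in the final estimate.
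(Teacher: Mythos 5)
This theorem is quoted in the paper from \cite{GS} without proof, so there is no internal argument to compare yours against; I can only assess your sketch on its own terms. The overall architecture is the standard one: nested clopen partitions obtained from uniform disconnectedness, finiteness and bounded branching from compactness and doubling, guaranteed splitting within a bounded number of generations from uniform perfectness, and a coding map into $F^\infty$. Up to the construction of the partitions $\mathcal{P}_n$ with $\mathrm{diam}(P)\le C_1\lambda^n$ and $\mathrm{dist}(P,P')\ge C_2\lambda^n$, your outline is sound.

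The genuine gap is in the last step. After the binary reorganization you claim the two-sided power law $d_{1/2}(\Phi(x),\Phi(y))\asymp d(x,y)^{\alpha}$, i.e.\ a snowflake equivalence; this is false in general, and no choice of $\lambda$ repairs it. Concretely, take $X=\{0,1\}^{\mathbb N}$ with the ultrametric $d(x,y)=\prod_{k\le L(x,y)}\rho_k$, where $\rho_k\in\{1/2,1/10\}$ is chosen in long alternating blocks so that the averages $\frac1n\sum_{k\le n}\log(1/\rho_k)$ oscillate. This space satisfies every hypothesis of the theorem (ultrametric, hence uniformly disconnected; doubling with constant $2$; uniformly perfect with $c=1/10$), but its lower and upper box dimensions differ, whereas those of $(F^\infty,d_{1/2})$ coincide; since a two-sided power comparison rescales both box dimensions by the same factor $1/\alpha$, no snowflake equivalence onto $F^\infty$ can exist. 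The failure in your argument is that the number of genuine binary levels inserted between metric scales $\lambda^n$ and $\lambda^{n+1}$ is not constant: after the chain contractions forced by pieces that do not split at the next level, it varies between roughly $1$ and $\log_2 k$ depending on the actual branching, so the discrepancy between combinatorial depth $\ell$ and metric depth $n$ accumulates linearly over generations rather than staying a bounded multiplicative factor; choosing $\lambda$ small does not stop this accumulation. What survives, and what you should use instead, is the relative estimate: along any branch the two depth functions satisfy $\ell_{n+1}-\ell_n\le A$ and $\ell_{n+j_0}-\ell_n\ge 1$ with $A,j_0$ depending only on the data (the latter being exactly where uniform perfectness enters). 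From this, whenever $d(x,y)\le t\,d(x,z)$ one deduces $d_{1/2}(\Phi(x),\Phi(y))\le \eta(t)\,d_{1/2}(\Phi(x),\Phi(z))$ with $\eta(t)=K\max\{t^{\beta},t^{1/\beta}\}$ for suitable $K>0$ and $\beta\in(0,1]$, which is quasisymmetry without any absolute H\"older comparison.
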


\subsection{$p$-adic numbers}

In what follows $p$ will be a fixed prime number. The set $\bq_p$ is
defined as a completion of the rational numbers $\bq$ with respect
to the norm $|\cdot|_p:\bq\to\br_+$ given by
\begin{eqnarray}
|x|_p=\left\{
\begin{array}{c}
  p^{-r} \ x\neq 0,\\
  0,\ \quad x=0,
\end{array}
\right.
\end{eqnarray}
here, $x=p^r\frac{m}{n}$ with $r,m\in\bz,$ $n\in\bn$,
$(m,p)=(n,p)=1$. The absolute value $|\cdot|_p$ is non-Archimedean,
meaning that it satisfies the strong triangle inequality $|x + y|_p
\leq \max\{|x|_p, |y|_p\}$. We recall a nice property of the norm,
i.e. if $|x|_p>|y|_p$ then $|x+y|_p=|x|_p$. Note that this is a
crucial property which is proper to the non-Archimedenity of the
norm.

Any $p$-adic number $x\in\bq_p$, $x\neq 0$ can be uniquely represented in the form
\begin{equation}\label{canonic}
x=p^{\g(x)}(x_0+x_1p+x_2p^2+...),
\end{equation}
where $\g=\g(x)\in\bz$ and $x_j$ are integers, $0\leq x_j\leq
p-1$, $x_0>0$, $j=0,1,2,\dots$ In this case $|x|_p=p^{-\g(x)}$.
Note that the basics of $p$-adic analysis are given in \cite{R}.

The next lemma immediately follows from the properties of the
$p$-adic norm

\begin{lem} Let $f$ be a mapping on $\bq_p$. Then the following statements are equivalent\\
$(a)$ $f$ is contraction;\\
$(b)$ $|f(x)-f(y)|_p<|x-y|_p$ for all $x,y\in\bq_p$;\\
$(c)$  $|f(x)-f(y)|_p\leq\frac{1}{p}|x-y|_p$ for all $x,y\in\bq_p$.
\end{lem}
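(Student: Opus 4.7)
The plan is to prove the lemma by establishing the cycle of implications $(c)\Rightarrow(a)\Rightarrow(b)\Rightarrow(c)$, of which only the last carries any real content.

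First I would dispose of $(c)\Rightarrow(a)$ by noting that $\frac{1}{p}<1$, so the inequality in $(c)$ is literally the contraction condition with Lipschitz constant $\frac{1}{p}$. Next, $(a)\Rightarrow(b)$ is immediate from the definition: a contraction satisfies $|f(x)-f(y)|_p\leq c|x-y|_p$ for some $c<1$, which gives strict inequality whenever $x\neq y$, while for $x=y$ both sides vanish.

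The substantive step is $(b)\Rightarrow(c)$. Here the key ingredient is the \emph{discreteness} of the $p$-adic absolute value: the range of $|\cdot|_p$ on $\bq_p\setminus\{0\}$ equals $\{p^n:n\in\bz\}$, with no accumulation points except $0$. Fix $x\neq y$ and write $|x-y|_p=p^{-\g}$ for some $\g\in\bz$. By $(b)$ we have $|f(x)-f(y)|_p<p^{-\g}$. Since $|f(x)-f(y)|_p$ is either $0$ or a power $p^{-m}$, and the largest such value strictly less than $p^{-\g}$ is $p^{-\g-1}$, we conclude
\[
|f(x)-f(y)|_p\leq p^{-\g-1}=\frac{1}{p}\,|x-y|_p,
\]
as required. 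The case $x=y$ is trivial since both sides are $0$.

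The only minor subtlety I expect is the boundary case $x=y$ in statement $(b)$, where strict inequality cannot literally hold; this is harmless and is resolved by reading $(b)$ as applying to distinct points (or noting that both sides equal zero when $x=y$). Apart from that, the entire lemma reduces to the single observation that $|\cdot|_p$ takes values in a discrete set, a feature specific to the non-Archimedean setting and one that would fail in $\br$.
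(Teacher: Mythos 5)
Your proof is correct, and it supplies exactly the argument the paper leaves implicit: the paper offers no proof, stating only that the lemma ``immediately follows from the properties of the $p$-adic norm,'' and the relevant property is precisely the one you isolate, namely that $|\cdot|_p$ takes values in the discrete set $\{p^n : n\in\bz\}\cup\{0\}$, so a strict inequality $|f(x)-f(y)|_p<|x-y|_p$ automatically improves to a factor of $\frac{1}{p}$. Your handling of the degenerate case $x=y$ in statement $(b)$ is the right reading of the lemma as stated.
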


For each $a\in \bq_p$, $r>0$ we denote
$$
B_r(a)=\{x\in \bq_p : |x-a|_p\leq r\}, \ \  B_r^-(a)=\{x\in \bq_p
: |x-a|_p< r\} $$ and the set of all {\it $p$-adic integers}
$\bz_{p}=\left\{ x\in \bq_{p}:\ |x|_{p}\leq1\right\}$.

\section{unconventional limit set}

Let $M,N,L$ be fixed positive integers. For convenience, instead
of $\{1,2,...,N\}$ we will write $[1,N]$. Let consider a  family
$\xi:=\{\xi_{ij}:[1,N]\to[1,N]: \ (i,j)\in[1,M]\times[1,L]\}$ of
mappings. Let $\{f_i\}_{i=1}^N$ be a finite set of contractive
mappings on $\mathbb Z_p$. In what follows, we frequently use the
denotation $\Sigma=[1,N]^\bn$.

For each $\a\in\Sigma$, $n\in\bn$ we denote
\begin{equation}\label{Falphan1}
F_{\alpha,n}=\sum\limits_{i=1}^M\prod_{j=1}^LF_{\alpha,n}^{\xi_{ij}}.
\end{equation}
where
\begin{equation}\label{Falphan}
F_{\alpha,n}^{\xi_{ij}}=f_{\xi_{ij}(\alpha_1)}\circ\cdots\circ
f_{\xi_{ij}(\alpha_n)}, \ \ \a=(\a_1,\dots,\a_n,\dots).
\end{equation}

Put
$$
\cf_\xi=\bigcup\limits_{n\geq1}\cf_{\xi,n}, \ \
\cf_{\xi,n}=\left\{F_{\alpha,n}:\ \alpha\in\Sigma\right\}.
$$

The set $\cf_\xi$ is called a \textit{unconventional set} of the
semigroup $G$ generated by a finite set $\{f_i\}_{i=1}^N$.

\begin{rem} In the sequel, we always assume that the family $\xi$ satisfies the following condition
\begin{equation}\label{xi(a)}
\bigcup\limits_{k=1}^N\bigcup\limits_{i=1}^M\bigcup\limits_{j=1}^L\{\xi_{ij}(k)\}=[1,N].
\end{equation}
Otherwise, the set $\cf_\xi$ will be generated by a subset of
$\{f_i\}_{i=1}^N$.
\end{rem}

\begin{ex} Let us construct an example of a family of mappings $\xi=\{\xi_{ij}:[1,N]\to[1,N]\}$ which satisfies \eqref{xi(a)}.
For any integer number $\ell\in[1,N]$ we define an action on
$[1,N]$ by
$$
(\ell\ast k)=\left\{\begin{array}{ll}
(\ell+k)(\operatorname{mod }N)&\mbox{if }\ N\not|(\ell+k)\\
N&\mbox{if }\ N|(\ell+k).
\end{array}\right. \ \ k\in[1,N].
$$
Then for any number $\xi_{ij}\in[1,N]$ we define
$\xi_{ij}(k):=(\xi_{ij}\ast k)$, $k\in[1,N]$. It is clear that
$\xi_{ij}:[1,N]\to[1,N]$ and  \eqref{xi(a)} holds.
\end{ex}

\begin{lem}\label{Flemma} For any $F\in\cf_\xi$ the
following statements hold:\\
$(a)$ $F:\bz_p\to\bz_p$;\\
$(b)$ $F$ is a contraction.
\end{lem}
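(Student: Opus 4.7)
The plan is to treat parts $(a)$ and $(b)$ simultaneously by building $F_{\alpha,n}$ up from its constituents in two stages: first the compositions $F_{\alpha,n}^{\xi_{ij}}$, then the product-and-sum combination that defines $F_{\alpha,n}$. At every stage I verify that the resulting map both sends $\bz_p$ into $\bz_p$ and satisfies the contraction inequality $|F(x)-F(y)|_p\leq\frac{1}{p}|x-y|_p$ from part $(c)$ of the preceding lemma.

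First, for each fixed $(i,j)$ and $\alpha\in\Sigma$, the map $F_{\alpha,n}^{\xi_{ij}}=f_{\xi_{ij}(\alpha_1)}\circ\cdots\circ f_{\xi_{ij}(\alpha_n)}$ is a composition of finitely many contractions on $\bz_p$. Since each $f_k$ sends $\bz_p$ to $\bz_p$ by hypothesis, so does the composition; and since the contraction constant $\frac{1}{p}$ from the preceding lemma is multiplicative under composition, $F_{\alpha,n}^{\xi_{ij}}$ is itself a contraction on $\bz_p$. This handles $(a)$ and $(b)$ for every factor appearing inside $F_{\alpha,n}$.

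Next, I would prove two short stability lemmas (stated inline): if $g,h:\bz_p\to\bz_p$ are contractions, then so are $g+h$ and $g\cdot h$. For the sum, the strong triangle inequality gives
\[
|(g+h)(x)-(g+h)(y)|_p\leq\max\bigl\{|g(x)-g(y)|_p,\,|h(x)-h(y)|_p\bigr\}\leq\tfrac{1}{p}|x-y|_p.
\]
For the product, I write
\[
g(x)h(x)-g(y)h(y)=g(x)\bigl(h(x)-h(y)\bigr)+\bigl(g(x)-g(y)\bigr)h(y),
\]
apply the strong triangle inequality, and use $|g(x)|_p,|h(y)|_p\leq 1$ (which is where membership in $\bz_p$ is essential) to bound the right side by $\frac{1}{p}|x-y|_p$. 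Closure of $\bz_p$ under addition and multiplication takes care of part $(a)$ for sums and products.

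Iterating the product stability across $j=1,\dots,L$ shows $\prod_{j=1}^L F_{\alpha,n}^{\xi_{ij}}$ is a contraction from $\bz_p$ to $\bz_p$, and iterating the sum stability across $i=1,\dots,M$ then yields the same for $F_{\alpha,n}$, completing both $(a)$ and $(b)$. I do not foresee a genuine obstacle: the only subtlety is remembering that the product estimate requires the functions to be bounded by $1$ in norm, which is exactly why the statement is formulated on $\bz_p$ rather than on all of $\bq_p$; outside the unit ball the pointwise product of two contractions need not be a contraction.
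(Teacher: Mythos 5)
Your proposal is correct and follows essentially the same route as the paper: the strong triangle inequality handles the sum, and a telescoping decomposition of the product difference together with the bound $|F_{\alpha,n}^{\xi_{ij}}(x)|_p\leq 1$ (i.e.\ membership in $\bz_p$) gives the contraction estimate. The only difference is presentational --- you iterate a two-factor product lemma where the paper writes the full $L$-fold telescoping identity in one step --- and you correctly identify the one genuine subtlety, namely that the product estimate needs the factors to have norm at most $1$.
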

\begin{proof}
Let $F\in\cf_\xi$. By construction of the set $\cf_\xi$ there
exist $\alpha\in\Sigma$ and an integer number $n\geq1$ such that
\begin{equation}\label{F=}
F=\sum_{i=1}^{M}\prod_{j=1}^{L}F_{\alpha,n}^{\xi_{ij}}.
\end{equation}

$(a)$ Since $f_k:\bz_p\to\bz_p,\ k=\overline{1,N}$ by
\eqref{Falphan} we have $F_{\alpha,n}^{\xi_{ij}}:\bz_p\to\bz_p$
for any $(i,j)\in[1,M]\times[1,L]$. Consequently, for any
$x\in\bz_p$ using the strong triangle inequality from \eqref{F=}
one can find
$$
|F(x)|_p\leq\max\limits_{1\leq i\leq M}\left|\prod\limits_{j=1}^{L}F_{\alpha, n}^{\xi_{ij}}(x)\right|_p
\leq\max\limits_{1\leq i\leq M\atop{1\leq j\leq L}}\left|F_{\alpha, n}^{\xi_{ij}}(x)\right|_p\leq1.
$$

$(b)$ Let $x,y\in\bz_p$. Then we have
\begin{eqnarray}\label{F(x)-F(y)}
F(x)-F(y)&=&\sum\limits_{i=1}^{M}\prod\limits_{j=1}^{L}F_{\alpha, n}^{\xi_{ij}}(x)-
\sum\limits_{i=1}^{M}\prod\limits_{j=1}^{L}F_{\alpha, n}^{\xi_{ij}}(y)\nonumber\\
&=&\sum\limits_{i=1}^{M}\sum\limits_{j=1}^{L}\left(F_{\alpha, n}^{\xi_{ij}}(x)
-F_{\alpha, n}^{\xi_{ij}}(y)\right)\prod\limits_{k>j}F_{\alpha, n}^{\xi_{ik}}(x)
\prod\limits_{l<j}F_{\alpha, n}^{\xi_{il}}(y)
\end{eqnarray}
Again by means of the strong triangle inequality from
\eqref{F(x)-F(y)}, one gets
\begin{equation}\label{normF(x)-F(y)}
\left|F(x)-F(y)\right|_p\leq\max\limits_{1\leq i\leq M\atop{1\leq j\leq L}}
\left|F_{\alpha, n}^{\xi_{ij}}(x)-F_{\alpha, n}^{\xi_{ij}}(y)\right|_p
\end{equation}
By \eqref{Falphan} we have
$$
\left|F_{\alpha, n}^{\xi_{ij}}(x)-F_{\alpha, n}^{\xi_{ij}}(y)\right|_p\leq\frac{1}{p}|x-y|_p\ \ \ \mbox{for any }
(i,j)\in[1,M]\times[1,L].
$$
Now substituting the last one into \eqref{normF(x)-F(y)} one finds
the required assertion.
\end{proof}

A main aim of this paper is to study limiting set of $\cf_\xi$. By
{\it unconventional limit set} of $\cf_\xi$ is meant the set
$\Lambda^\xi$ which the compliment of the {\it discontinuity set}
$\Omega^\xi$, i.e. $\bz_p\setminus\Omega^\xi$. The discontinuity
set $\Omega^\xi\subset\bz_p$ of $\cf_\xi$ is defined as follows:
$x\in\Omega^\xi$ if and only if there is a disk $B_r(x)$ such that
there are only finitely many $F\in\cf_\xi$ satisfying
$F(B_r(x))\cap B_r(x)=\emptyset$.

\section{some properties of the unconventional limit set}

In this section we study several metric properties of the set
$\Lambda^\xi$.

Denote
\begin{equation}\label{L_0}
\Lambda_0^\xi=\left\{\sum_{i=1}^{M}\prod_{j=1}^{L}x_{\xi_{ij},\alpha}^{(n)}: x_{\xi_{ij},\alpha}^{(n)}=F_{\alpha,n}^{\xi_{ij}}(x_{\xi_{ij},\alpha}^{(n)})
\mbox{ for some }\alpha\in\Sigma\mbox{ and }n\in\bn\right\}.
\end{equation}

\begin{thm}\label{teor1}
The limit set $\Lambda^\xi$ of $\cf_{\xi}$ coincides with the
closure of $\Lambda_0^\xi$.
\end{thm}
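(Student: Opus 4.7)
The plan is to prove $\Lambda^\xi=\overline{\Lambda_0^\xi}$ by the two inclusions, after first noting that $\Lambda^\xi$ is automatically closed: since every ball in $\bz_p$ is clopen, the equality $B_r(y)=B_r(x)$ for every $y\in B_r(x)$ implies that any disk witnessing $x\in\Omega^\xi$ witnesses the same for every $y\in B_r(x)$, so $\Omega^\xi$ is open.

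For $\Lambda_0^\xi\subseteq\Lambda^\xi$, I would fix a generic element $x=\sum_{i=1}^{M}\prod_{j=1}^{L}x_{\xi_{ij},\a}^{(n)}\in\Lambda_0^\xi$ and let $\b\in\Sigma$ be the periodic repetition of $(\a_1,\dots,\a_n)$. A direct unwinding of \eqref{Falphan} gives $F_{\b,kn}^{\xi_{ij}}=\bigl(F_{\a,n}^{\xi_{ij}}\bigr)^{k}$ for every $k\geq 1$; this is a contraction of ratio $\leq p^{-kn}$ sharing the fixed point $x_{\xi_{ij},\a}^{(n)}$, so $F_{\b,kn}^{\xi_{ij}}(x)\to x_{\xi_{ij},\a}^{(n)}$. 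Continuity of the ring operations on $\bz_p$ then yields
\[
F_{\b,kn}(x)=\sum_{i=1}^{M}\prod_{j=1}^{L}F_{\b,kn}^{\xi_{ij}}(x)\ \longrightarrow\ \sum_{i=1}^{M}\prod_{j=1}^{L}x_{\xi_{ij},\a}^{(n)}=x.
\]
Hence, for every $r>0$ and all sufficiently large $k$, $F_{\b,kn}(x)\in B_r(x)$, i.e.\ $F_{\b,kn}(B_r(x))\cap B_r(x)\neq\varnothing$. The maps $F_{\b,kn}$ are pairwise distinct elements of $\cf_\xi$ (a quick check, excluding only the trivial case in which some $F_{\a,n}^{\xi_{ij}}$ is constant), so infinitely many members of $\cf_\xi$ recur at $x$, placing $x$ in $\Lambda^\xi$. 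Closedness of $\Lambda^\xi$ then upgrades this to $\overline{\Lambda_0^\xi}\subseteq\Lambda^\xi$.

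For the reverse inclusion $\Lambda^\xi\subseteq\overline{\Lambda_0^\xi}$, I would fix $x\in\Lambda^\xi$ and $r>0$. Since $|\cf_{\xi,n}|\leq N^n<\infty$, the infinitude of $\{F\in\cf_\xi:F(B_r(x))\cap B_r(x)\neq\varnothing\}$ forces a sequence $F_k=F_{\a^{(k)},n_k}$ with $n_k\to\infty$ and $y_k\in B_r(x)$ such that $F_k(y_k)\in B_r(x)$. Each $F_{\a^{(k)},n_k}^{\xi_{ij}}$ is a $p^{-n_k}$-contraction with fixed point $x_{\xi_{ij},\a^{(k)}}^{(n_k)}$, so
\[
\bigl|F_{\a^{(k)},n_k}^{\xi_{ij}}(y_k)-x_{\xi_{ij},\a^{(k)}}^{(n_k)}\bigr|_p\leq p^{-n_k}.
\]
Since every term lies in $\bz_p$ and hence has $p$-adic norm $\leq 1$, the strong triangle inequality propagates this $p^{-n_k}$-congruence through the $L$-fold products and the $M$-fold sum, so setting $z_k:=\sum_{i=1}^{M}\prod_{j=1}^{L}x_{\xi_{ij},\a^{(k)}}^{(n_k)}\in\Lambda_0^\xi$ yields $|F_k(y_k)-z_k|_p\leq p^{-n_k}$. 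Combined with $|F_k(y_k)-x|_p\leq r$, this gives $|z_k-x|_p\leq r$ whenever $p^{-n_k}\leq r$, so $\Lambda_0^\xi\cap B_r(x)\neq\varnothing$ for every $r>0$, i.e., $x\in\overline{\Lambda_0^\xi}$.

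I expect the main technical obstacle to be the careful propagation of the $p^{-n_k}$-approximation through the $M$-fold sum of $L$-fold products in the second inclusion. This is precisely where the non-Archimedean nature of $\bz_p$ is indispensable: in an Archimedean setting products can amplify errors, whereas the uniform bound $|\cdot|_p\leq 1$ on $\bz_p$ keeps the estimate clean, because $|ab-a'b'|_p\leq\max(|a-a'|_p,|b-b'|_p)$ whenever $|a|_p,|b'|_p\leq 1$, and an analogous estimate holds for sums.
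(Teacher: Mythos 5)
Your proof is correct, and for half of it you take a genuinely different route from the paper. The closedness of $\Lambda^\xi$ and the inclusion $\overline{\Lambda_0^\xi}\subseteq\Lambda^\xi$ are handled exactly as in the paper: the authors also form $F_m=\sum_{i}\prod_{j}(F_{\alpha,n}^{\xi_{ij}})^m\in\cf_{\xi,nm}$ (your periodic word $\beta$ is just the explicit index realizing this) and show $|F_m(y)-x|_p\le p^{-m}$ for all $y\in B_r(x)$. For the reverse inclusion the paper argues by contraposition: if $x_0\notin\overline{\Lambda_0^\xi}$, it picks $r$ with $B_r(x_0)\cap\overline{\Lambda_0^\xi}=\emptyset$ and $n_0$ with $p^{-n_0}<r$; for any $F\in\cf_{\xi,n_0}$ the unique fixed point $z_F$ of the composite map $F$ is shown to lie within $p^{-n_0}<r$ of $x_F=\sum_i\prod_j x_{\xi_{ij},\alpha}^{(n_0)}\in\Lambda_0^\xi$, and a chain of ultrametric isosceles-triangle identities gives $|F(y)-x_0|_p=|y-x_F|_p>r$ for every $y\in B_r(x_0)$, hence $F(B_r(x_0))\cap B_r(x_0)=\emptyset$; since only finitely many functions occur at levels $n<n_0$, this places $x_0\in\Omega^\xi$. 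You instead argue directly: the finiteness of each level $\cf_{\xi,n}$ extracts recurrent maps $F_k$ of depth $n_k\to\infty$, and the $p^{-n_k}$-approximation of $F_k(y_k)$ by the fixed-point combination $z_k\in\Lambda_0^\xi$ drops a point of $\Lambda_0^\xi$ into every ball about $x$. Your version is shorter, needs no case analysis and no appeal to the fixed point of the composite $F$, and isolates cleanly where the bound $|\cdot|_p\le 1$ on $\bz_p$ enters; the paper's version yields the marginally stronger quantitative fact that \emph{every} map of level at least $n_0$ moves $B_r(x_0)$ entirely off itself. One caveat you share with the paper (and, unlike the paper, at least flag): both arguments implicitly treat the family $\{F_m\}_{m\ge1}$ as infinitely many \emph{distinct} elements of $\cf_\xi$, which can fail in degenerate situations (your parenthetical exclusion of constant $F_{\alpha,n}^{\xi_{ij}}$ does not quite cover all collapses, e.g.\ $g^k+h^k$ with $g=-h$); this is really an ambiguity in how the definition of $\Omega^\xi$ counts members of $\cf_\xi$ rather than a defect specific to your argument.
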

\begin{proof}
Let us first show that $\Lambda^\xi$ is closed. It is enough to
establish that $\Omega^\xi$ is open. Take any $x\in\Omega^\xi$.
Then there exist $r>0$ and $\{F_k\}_{k=1}^m\subset\cf_\xi$ such
that $F_k(B_r(x))\cap B_r(x)=\emptyset,\ k=\overline{1,m}$. Since
for any $y\in B^-_r(x)$, one has $B_r(y)=B_r(x)$, hence we have
$F_k(B_r(y))\cap B_r(y)=\emptyset$. This implies that
$B^-_r(x)\subset\Omega^\xi$, so $\Omega^\xi$ is open.

Let $x\in\Lambda_0^\xi$. Then there exist $\alpha\in\Sigma$ and an
integer number $n\geq1$ such that
\begin{equation}\label{pred_x}
x=\sum_{i=1}^{M}\prod_{j=1}^{L}x_{\xi_{ij},\alpha}^{(n)},
\end{equation}
where $x_{\xi_{ij},\alpha}^{(n)}$ is a fixed point of $F_{\alpha, n}^{\xi_{ij}}$.

Consider a sequence $\{F_m\}_{m=1}^\infty$ defined by
\begin{equation}\label{F_m}
F_m=\sum\limits_{i=1}^{M}\prod\limits_{j=1}^{L}(F_{\alpha, n}^{\xi_{ij}})^m=
\sum\limits_{i=1}^{M}\prod\limits_{j=1}^{L}\left(f_{\xi_{ij}(\alpha_1)}\circ\cdots\circ f_{\xi_{ij}(\alpha_n)}\right)^m.
\end{equation}
It is clear that for any $m\geq1$, we have $F_m\in\cf_{\xi, nm}$,
hence $F_m\in\cf_{\xi}$. Take any $r>0$ and $y\in B_r(x)$. Then
from \eqref{pred_x} and \eqref{F_m} one finds
\begin{eqnarray*}
F_m(y)-x&=&\sum\limits_{i=1}^{M}\prod\limits_{j=1}^{L}(F_{\alpha, n}^{\xi_{ij}})^m(y)-\sum\limits_{i=1}^{M}\prod\limits_{j=1}^{L}x_{\xi_{ij},\alpha}^{(n)}\\
&=&\sum\limits_{i=1}^{M}\prod\limits_{j=1}^{L}(F_{\alpha, n}^{\xi_{ij}})^m(y)-\sum\limits_{i=1}^{M}\prod\limits_{j=1}^{L}F_{\alpha, n}^{\xi_{ij}} (x_{\xi_{ij},\alpha}^{(n)})\\
&=&\sum\limits_{i=1}^{M}\prod\limits_{j=1}^{L}(F_{\alpha, n}^{\xi_{ij}})^m(y)-\sum\limits_{i=1}^{M}\prod\limits_{j=1}^{L}(F_{\alpha, n}^{\xi_{ij}})^m (x_{\xi_{ij},\alpha}^{(n)})\\
&=&\sum_{i=1}^{M}\sum_{j=1}^{L}\left[(F_{\alpha, n}^{\xi_{ij}})^m(y)-(F_{\alpha, n}^{\xi_{ij}})^m(x_{\xi_{ij},\alpha}^{(n)})\right]
\prod_{k>j}(F_{\alpha, n}^{\xi_{ik}})^m(y)\prod_{l<j}(F_{\alpha, n}^{\xi_{il}})^m(x_{\xi_{il},\alpha}^{(n)}).
\end{eqnarray*}
The last equality with the strong triangle inequality implies that
\begin{equation}\label{normF_m(y)-x}
\left|F_m(y)-x\right|_p\leq\max_{i,j}\left|(F_{\alpha, n}^{\xi_{ij}})^m(y)-(F_{\alpha, n}^{\xi_{ij}})^m(x_{\xi_{ij},\alpha}^{(n)})\right|_p.
\end{equation}
The contractivity  $F_{\alpha, n}^{\xi_{ij}}$ with
\eqref{normF_m(y)-x} yields
$$
\left|F_m(y)-x\right|_p\leq\frac{1}{p^m}\max_{i,j}\left|y-x_{\xi_{ij},\alpha}^{(n)}\right|_p\leq\frac{1}{p^m}.
$$
Then there exists a positive integer number $m_r$ such that
$F_m(y)\in B_r(x)$ for all $m>m_r$. This means that
$F^{m}(B_r(x))\cap B_r(x)\neq\emptyset$. Consequently,
$x\in\Lambda^\xi$. Since $\Lambda^\xi$ is closed, we have
$\overline{\Lambda}_0^\xi\subset\Lambda^\xi$.

Now suppose that $x_0\not\in\overline{\Lambda}_0^\xi$. Then there
exists $r>0$ such that
$B_r(x_0)\cap\overline{\Lambda}_0^\xi=\emptyset$.
 Choose a
positive integer $n_0$ such that $\frac{1}{p^{n_0}}<r$. Consider a
function $F\in\cf_\xi$ defined by
$$
F=\sum_{i=1}^{M}\prod_{j=1}^{L}F_{\alpha, n_0}^{\xi_{ij}}\ \ \
\mbox{for some }\alpha\in\Sigma.
$$
It is easy to see that
\begin{equation}\label{Fp^n}
|F(x)-F(y)|_p\leq\frac{1}{p^n}|x-y|_p\leq\frac{1}{p^n}\ \ \ \mbox{for any } x,y\in\bz_p.
\end{equation}

Denote
$$
x_F=\sum_{i=1}^{M}\prod_{j=1}^{L}x_{\xi_{ij},\alpha}^{(n_0)},
$$
here as before  $x_{\xi_{ij},\alpha}^{(n_0)}$ is a fixed point of
$F_{\alpha, n_0}^{\xi_{ij}}$. Then $x_F\in\Lambda_0^\xi$.
According to Lemma \ref{Flemma} the function $F$ has a unique
fixed point $z_F$ on $\bz_p$. Now from the strong triangle
inequality and \eqref{Fp^n} we obtain
\begin{eqnarray}\label{z_F-x_F}
|z_F-x_F|_p&=&\left|F(z_F)-x_F\right|_p\nonumber\\
&=&\left|\sum_{i=1}^{M}\prod_{j=1}^{L}F_{\alpha,
n_0}^{\xi_{ij}}(z_F)-
\sum_{i=1}^{M}\prod_{j=1}^{L}F_{\alpha, n_0}^{\xi_{ij}}(x_{\xi_{ij},\alpha}^{(n_0)})\right|_p\nonumber\\
&=&\left|\sum_{i=1}^{M}\sum_{j=1}^{L}\left[F_{\alpha,
n_0}^{\xi_{ij}}(z_F)- F_{\alpha,
n_0}^{\xi_{ij}}(x_{\xi_{ij},\alpha}^{(n_0)})\right]\prod_{k>j\atop{l<j}}F_{\alpha,
n_0}^{\xi_{ik}}(z_F)
F_{\alpha, n_0}^{\xi_{il}}(x_{\xi_{il},\alpha}^{(n_0)})\right|_p\nonumber\\
&\leq&\max_{i,j}\left|F_{\alpha, n_0}^{\xi_{ij}}(z_F)-F_{\alpha,
n_0}^{\xi_{ij}}(x_{\xi_{ij},\alpha}^{(n_0)})\right|_p\prod_{k>j\atop{l<j}}\left|F_{\alpha,
n_0}^{\xi_{ik}}(z_F)F_{\alpha,
n_0}^{\xi_{il}}(x_{\xi_{il},\alpha}^{(n_0)})\right|_p\leq\frac{1}{p^{n_0}}<r.
\end{eqnarray}
For any $y\in B_r^-(x_0)$ due to contractivit of $F$ one gets
$$
|F(y)-z_F|_p=|F(y)-F(z_F)|_p<|y-z_F|_p.
$$
Again the strong triangle inequality implies
$$
|F(y)-y|_p=|F(y)-z_F+z_F-y|_p=|y-z_F|_p.
$$
Since $y\in B_r^-(x_0)$ and $B_r^-(x_0)\cap\overline{\Lambda}_0^\xi=\emptyset$, $x_F\in\Lambda_0^\xi$
one concludes that $|y-x_F|_p>r$. Therefore, from \eqref{z_F-x_F} it follows that
$$
|y-z_F|_p=|y-x_F+x_F-z_F|_p=|y-x_F|_p>r.
$$
Hence,
$$
|F(y)-y|_p=|y-x_F|_p>r.
$$
Consequently, with $|x_0-y|_p<r$ one finds
$$
|F(y)-x_0|_p=|F(y)-y+y-x_0|_p=|F(y)-y|_p>r.
$$
This means that $F(y)$ does not belongs to the disk $B_r(x_0)$.
Hence, $F(B_r(x_0))\cap B_r(x_0)=\emptyset$ which implies
$x_0\in\Omega^\xi$. It follows that
$\Lambda^\xi\subset\overline{\Lambda}_0^\xi$. Hence
$\Lambda^\xi=\overline{\Lambda}_0^\xi$. This completes the proof.
\end{proof}

Now we need the following auxiliary fact.

\begin{lem}\label{xaaa}
Let $\a\in\Sigma$. Then a sequence defined by
\begin{equation}\label{sequence}
x_{\alpha}^{(n)}=\sum_{i=1}^{M}\prod_{j=1}^{L}x_{\xi_{ij},\alpha}^{(n)}
\end{equation}
converges as $n\to\infty$. Here as before
$x_{\xi_{ij},\alpha}^{(n)}$ be a fixed point of $F_{\alpha,
n}^{\xi_{ij}}$.
\end{lem}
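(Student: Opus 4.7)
The plan is to show that $\{x_{\alpha}^{(n)}\}_{n=1}^{\infty}$ is a Cauchy sequence in the complete metric space $\bz_p$. Since $x_\alpha^{(n)}$ is built as a sum of products of the component fixed-point sequences $\{x_{\xi_{ij},\alpha}^{(n)}\}$, my strategy is first to prove that each of these component sequences is Cauchy in $\bz_p$ with an explicit rate $1/p^n$, and then to transfer this bound to the sum of products via a telescoping identity identical in form to the one used in the proof of Lemma \ref{Flemma}(b).

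For the component sequences, I first record (from Lemma \ref{Flemma}) that the composition $F_{\alpha,n}^{\xi_{ij}}=f_{\xi_{ij}(\alpha_1)}\circ\cdots\circ f_{\xi_{ij}(\alpha_n)}$ maps $\bz_p$ into itself and, being a composition of $n$ factors each of Lipschitz constant at most $1/p$, satisfies
\begin{equation*}
\bigl|F_{\alpha,n}^{\xi_{ij}}(x)-F_{\alpha,n}^{\xi_{ij}}(y)\bigr|_p\leq\frac{1}{p^n}|x-y|_p,\qquad x,y\in\bz_p.
\end{equation*}
Hence $F_{\alpha,n}^{\xi_{ij}}$ admits a unique fixed point $x_{\xi_{ij},\alpha}^{(n)}\in\bz_p$. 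The key identity is the factorization $F_{\alpha,n+1}^{\xi_{ij}}=F_{\alpha,n}^{\xi_{ij}}\circ f_{\xi_{ij}(\alpha_{n+1})}$, so that setting $w:=f_{\xi_{ij}(\alpha_{n+1})}\bigl(x_{\xi_{ij},\alpha}^{(n+1)}\bigr)\in\bz_p$ we have $x_{\xi_{ij},\alpha}^{(n+1)}=F_{\alpha,n}^{\xi_{ij}}(w)$. Applying the contraction estimate above,
\begin{equation*}
\bigl|x_{\xi_{ij},\alpha}^{(n+1)}-x_{\xi_{ij},\alpha}^{(n)}\bigr|_p=\bigl|F_{\alpha,n}^{\xi_{ij}}(w)-F_{\alpha,n}^{\xi_{ij}}(x_{\xi_{ij},\alpha}^{(n)})\bigr|_p\leq\frac{1}{p^n}\bigl|w-x_{\xi_{ij},\alpha}^{(n)}\bigr|_p\leq\frac{1}{p^n},
\end{equation*}
because $w,x_{\xi_{ij},\alpha}^{(n)}\in\bz_p$. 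The strong triangle inequality then makes each $\{x_{\xi_{ij},\alpha}^{(n)}\}$ Cauchy.

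To conclude for the sum, I would expand the difference $x_{\alpha}^{(n+1)}-x_{\alpha}^{(n)}$ exactly as in formula \eqref{F(x)-F(y)} (telescoping within each product $\prod_{j=1}^{L}$ and over $i=1,\dots,M$), with all factors lying in $\bz_p$ and therefore of $p$-adic norm at most $1$. The strong triangle inequality then yields
\begin{equation*}
\bigl|x_{\alpha}^{(n+1)}-x_{\alpha}^{(n)}\bigr|_p\leq\max_{1\leq i\leq M,\,1\leq j\leq L}\bigl|x_{\xi_{ij},\alpha}^{(n+1)}-x_{\xi_{ij},\alpha}^{(n)}\bigr|_p\leq\frac{1}{p^n},
\end{equation*}
so $\{x_{\alpha}^{(n)}\}$ is Cauchy in $\bz_p$ and therefore converges. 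I expect the main obstacle to be the estimate $|x_{\xi_{ij},\alpha}^{(n+1)}-x_{\xi_{ij},\alpha}^{(n)}|_p\leq 1/p^n$: the correct move is to peel off the \emph{innermost} map $f_{\xi_{ij}(\alpha_{n+1})}$ so that $x_{\xi_{ij},\alpha}^{(n+1)}$ is realized as the image of a point in $\bz_p$ under $F_{\alpha,n}^{\xi_{ij}}$, thereby exposing the $1/p^n$ contraction factor; everything else is routine bookkeeping modeled on the sum–product manipulation already carried out in Lemma \ref{Flemma}.
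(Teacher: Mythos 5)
Your proposal is correct and follows essentially the same route as the paper: both rest on realizing the longer fixed point as the image under the shorter composition $F_{\alpha,n}^{\xi_{ij}}$ of a point of $\bz_p$ (thereby exposing the $1/p^n$ contraction factor) and then transferring the bound to the sum of products via the telescoping identity and the strong triangle inequality. The only cosmetic difference is that you estimate consecutive differences and invoke the ultrametric Cauchy criterion, whereas the paper bounds $|x_\alpha^{(n)}-x_\alpha^{(m)}|_p$ directly for arbitrary $n>m$ by peeling off the whole tail $f_{\xi_{ij}(\alpha_{m+1})}\circ\cdots\circ f_{\xi_{ij}(\alpha_n)}$ at once.
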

\begin{proof}
Due to the closedness of $\bz_p$ it is enough to show that the sequence
\eqref{sequence} is Cauchy.

Take an arbitrary $\varepsilon>0$ and choose $n_0\in\bn$ such that $\frac{1}{p^{n_0}}<\varepsilon$.
Then for any $n,m\geq n_0\ (n>m)$ we have
\begin{eqnarray*}
\left|x_\alpha^{(n)}-x_\alpha^{(m)}\right|_p&=&\left|\sum_{i=1}^M\prod_{j=1}^Lx_{\xi_{ij},\alpha}^{(n)}-
\sum_{i=1}^M\prod_{j=1}^Lx_{\xi_{ij},\alpha}^{(m)}\right|_p\\
&\leq&\max_{i,j}\left|x_{\xi_{ij},\alpha}^{(n)}-x_{\xi_{ij},\alpha}^{(m)}\right|_p=
\max_{i,j}\left|F_{\alpha,n}^{\xi_{ij}}(x_{\xi_{ij},\alpha}^{(n)})-F_{\alpha,m}^{\xi_{ij}}(x_{\xi_{ij},\alpha}^{(m)})\right|_p\\
&\leq&\frac{1}{p^m}\max_{i,j}\left|\left(f_{\xi_{ij}(\a_{m+1})}\circ\cdots\circ f_{\xi_{ij}(\a_{n})}\right)
(x_{\xi_{ij},\alpha}^{(n)})-x_{\xi_{ij},\alpha}^{(m)}\right|_p\leq\frac{1}{p^m}<\varepsilon.
\end{eqnarray*}
This means that $\{x_{\alpha}^{(n)}\}$ is a Cauchy sequence.
\end{proof}
For a given $\alpha\in\Sigma$ due to Lemma \ref{xaaa} we denote
$$
x_\alpha=\lim_{n\to\infty}x_{\alpha}^{(n)}.
$$
Put
$$
\widetilde{\Lambda}^\xi=\{x_\alpha: \alpha\in\Sigma\}.
$$
\begin{thm}\label{Lambda=tildeLambda}
One has $\Lambda^\xi=\widetilde{\Lambda}^\xi$.
Moreover, $\Lambda^\xi$ is compact.
\end{thm}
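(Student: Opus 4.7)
The plan is to prove both inclusions via the map $\Phi\colon\Sigma\to\bz_p$, $\alpha\mapsto x_\alpha$, constructed in Lemma \ref{xaaa}. First I would check that $\Phi$ is continuous, which in the non-Archimedean setting is essentially automatic: if $\alpha,\beta\in\Sigma$ agree on the first $n$ coordinates, then $F_{\alpha,n}^{\xi_{ij}}=F_{\beta,n}^{\xi_{ij}}$ for every $(i,j)$, so the fixed points $x_{\xi_{ij},\alpha}^{(n)}$ and $x_{\xi_{ij},\beta}^{(n)}$ coincide, hence $x_\alpha^{(n)}=x_\beta^{(n)}$. Combining this with the bound $|x_\alpha-x_\alpha^{(n)}|_p\leq p^{-n}$ extracted from the Cauchy estimate in the proof of Lemma \ref{xaaa}, the strong triangle inequality gives $|x_\alpha-x_\beta|_p\leq p^{-n}$. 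Thus $\Phi$ is continuous from the compact space $\Sigma=[1,N]^{\bn}$ (endowed with the product topology) into $\bz_p$, which immediately yields that $\widetilde{\Lambda}^\xi=\Phi(\Sigma)$ is compact, and in particular closed.

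Next I would establish $\widetilde{\Lambda}^\xi\subseteq\Lambda^\xi$. This is the easy inclusion: for each $\alpha\in\Sigma$, the element $x_\alpha$ is by definition the $p$-adic limit of the sequence $\{x_\alpha^{(n)}\}\subset\Lambda_0^\xi$, hence lies in $\overline{\Lambda_0^\xi}=\Lambda^\xi$ by Theorem \ref{teor1}.

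The reverse inclusion $\Lambda^\xi\subseteq\widetilde{\Lambda}^\xi$ is the step I expect to be the main content. Since $\widetilde{\Lambda}^\xi$ is already known to be closed (from the first paragraph), it suffices to show $\Lambda_0^\xi\subseteq\widetilde{\Lambda}^\xi$. Given $x=\sum_{i=1}^{M}\prod_{j=1}^{L}x_{\xi_{ij},\alpha}^{(n)}\in\Lambda_0^\xi$, I would consider the periodic sequence
\[
\beta=(\alpha_1,\dots,\alpha_n,\alpha_1,\dots,\alpha_n,\dots)\in\Sigma.
\]
Then for every $(i,j)$ and every $k\geq 1$ one has $F_{\beta,kn}^{\xi_{ij}}=(F_{\alpha,n}^{\xi_{ij}})^k$, which is a contraction with the same unique fixed point as $F_{\alpha,n}^{\xi_{ij}}$, namely $x_{\xi_{ij},\alpha}^{(n)}$. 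Consequently $x_\beta^{(kn)}=x_\alpha^{(n)}$ for all $k\in\bn$. Because $\{x_\beta^{(m)}\}$ converges (Lemma \ref{xaaa}) and the subsequence indexed by multiples of $n$ is constantly equal to $x_\alpha^{(n)}$, uniqueness of limits yields $x_\beta=x_\alpha^{(n)}=x$, so $x\in\widetilde{\Lambda}^\xi$.

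Combining the two inclusions gives $\Lambda^\xi=\widetilde{\Lambda}^\xi$, and since $\widetilde{\Lambda}^\xi$ was shown to be compact in the first step, $\Lambda^\xi$ is compact, completing the proof. The only delicate point is the construction of the periodic $\beta$ together with the observation that the fixed point of a power of a contraction equals the fixed point of the contraction itself; everything else reduces to continuity of $\Phi$ and compactness of $\Sigma$.
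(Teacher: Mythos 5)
Your proposal is correct and follows essentially the same route as the paper: continuity of the map $\alpha\mapsto x_\alpha$ on the compact space $\Sigma$ gives compactness and closedness of $\widetilde{\Lambda}^\xi$, and the conclusion follows from the sandwich $\Lambda_0^\xi\subset\widetilde{\Lambda}^\xi\subset\Lambda^\xi$ together with $\overline{\Lambda}_0^\xi=\Lambda^\xi$. The only difference is that the paper dismisses both inclusions as ``clear,'' whereas you supply the justifications explicitly --- in particular the periodic-sequence argument (using that a power of a contraction has the same fixed point) for $\Lambda_0^\xi\subset\widetilde{\Lambda}^\xi$ --- which is a welcome filling of a gap rather than a departure in method.
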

\begin{proof}
Let us first show that $\widetilde{\Lambda}^\xi$ is compact. To do
so, we define a mapping $\pi:\Sigma\to\widetilde{\Lambda}^\xi$ as
follows:
$$
\pi(\alpha)=x_\alpha.
$$
It is know that $\Sigma$ is compact and to establish compactness
of $\widetilde{\Lambda}^\xi$ it is sufficient to show that $\pi$
is continuous. Suppose that
$\alpha=(\alpha_1,\dots,\alpha_k,\dots)\in\Sigma$ and
$\varepsilon>0$. Choose $n_0\in\bn$ such that
$\frac{1}{p^{n_0}}<\varepsilon$. Then for any $\beta\in\Sigma$
with $\beta_k=\alpha_k,\ k\leq n_0$, we find
\begin{equation}\label{lim1}
\begin{array}{ll}
\left|\sum\limits_{i=1}^M\prod\limits_{j=1}^LF_{\beta,n_0}^{\xi_{ij}}(x_{\xi_{ij},\beta}^{(n_0)})
-\sum\limits_{i=1}^M\prod\limits_{j=1}^LF_{\alpha,n}^{\xi_{ij}}(x_{\xi_{ij},\alpha}^{(n)})\right|_p\\[3mm]
=\left|\sum\limits_{i}\sum\limits_j\left[F_{\beta,n_0}^{\xi_{ij}}(x_{\xi_{ij},\beta}^{(n_0)})
-F_{\alpha,n}^{\xi_{ij}}(x_{\xi_{ij},\alpha}^{(n)})\right]\prod\limits_{k>j\atop{l<j}}
F_{\beta,n_0}^{\xi_{ik}}(x_{\xi_{ik},\beta}^{(n_0)})F_{\alpha,n}^{\xi_{il}}(x_{\xi_{il},\alpha}^{(n)})\right|_p\\[3mm]
\leq\max\limits_{i,j}\left|F_{\beta,n_0}^{\xi_{ij}}(x_{\xi_{ij},\beta}^{(n_0)})
-F_{\alpha,n}^{\xi_{ij}}(x_{\xi_{ij},\alpha}^{(n)})\right|_p\leq\frac{1}{p^{n_0}}\max\limits_{i,j}
\left|x_{\xi_{ij},\beta}^{(n_0)}-x_{\xi_{ij},\alpha}^{(n)}\right|_p\leq\frac{1}{p^{n_0}}<\varepsilon.
\end{array}
\end{equation}
From \eqref{lim1} as $n\to\infty$ one gets
$$
|x_{\beta}^{(n_0)}-x_\alpha|_p<\varepsilon.
$$
It follows that $\pi$ is continuous. Hence,
$\widetilde{\Lambda}^\xi$ is compact. It is clear that
$\Lambda_0^\xi\subset\widetilde{\Lambda}^\xi\subset\Lambda^\xi. $
Therefore, due to closedness of $\widetilde{\Lambda}^\xi$ and
$\overline{\Lambda}_0^\xi=\Lambda^\xi$ we immediately find
$\Lambda^\xi=\widetilde{\Lambda}^\xi$. Consequently, $\Lambda$ is
compact. The proof is complete.
\end{proof}

\begin{lem}
Let
$F=\sum\limits_{i=1}^M\prod\limits_{j=1}^LF_{\alpha,n}^{\xi_{ij}}\in\cf_\xi$.
For any
$x=\sum\limits_{i=1}^M\prod\limits_{j=1}^Lx_{\xi_{ij},\beta}^{(\ell)}\in\Lambda_0^\xi$
let us define a mapping by
\begin{equation}\label{F[x]}
\widetilde{F}[x]:=\sum\limits_{i=1}^M\prod\limits_{j=1}^LF_{\alpha,n}^{\xi_{ij}}(x_{\xi_{ij},\beta}^{(\ell)}),
\end{equation}
Then one has $\widetilde{F}[\Lambda_0^\xi]\subset\Lambda^\xi$.
\end{lem}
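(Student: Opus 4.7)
The plan is to show that for any $x \in \Lambda_0^\xi$, the element $\widetilde{F}[x]$ lies in the closure $\overline{\Lambda_0^\xi}$, which by Theorem \ref{teor1} equals $\Lambda^\xi$. The idea is to exhibit $\widetilde{F}[x]$ as a $p$-adic limit of genuine elements of $\Lambda_0^\xi$, built from fixed points of carefully chosen compositions.

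Fix $F = \sum_{i=1}^M \prod_{j=1}^L F_{\alpha,n}^{\xi_{ij}}$ and $x = \sum_{i=1}^M \prod_{j=1}^L x_{\xi_{ij},\beta}^{(\ell)}$. For each $k \in \bn$ I would define $\gamma^{(k)} \in \Sigma$ to be any sequence whose first $n + k\ell$ coordinates are the concatenation
$$
(\alpha_1,\ldots,\alpha_n,\underbrace{\beta_1,\ldots,\beta_\ell,\ldots,\beta_1,\ldots,\beta_\ell}_{k\text{ copies}}).
$$
With this choice, a direct unpacking of \eqref{Falphan} yields
$$
F_{\gamma^{(k)},\,n+k\ell}^{\xi_{ij}} \;=\; F_{\alpha,n}^{\xi_{ij}}\circ \bigl(F_{\beta,\ell}^{\xi_{ij}}\bigr)^{k}
$$
for every $(i,j)$, because $\xi_{ij}$ is applied coordinatewise to the common sequence $\gamma^{(k)}$. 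This map is a contraction on $\bz_p$ by Lemma \ref{Flemma}, so it has a unique fixed point $z_k^{(ij)} := x_{\xi_{ij},\gamma^{(k)}}^{(n+k\ell)}$. Setting $y_k := \sum_{i=1}^M \prod_{j=1}^L z_k^{(ij)}$, the defining formula \eqref{L_0} gives $y_k \in \Lambda_0^\xi$.

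The convergence step is to show $z_k^{(ij)} \to F_{\alpha,n}^{\xi_{ij}}(x_{\xi_{ij},\beta}^{(\ell)})$ as $k \to \infty$. Writing $w_{ij} := F_{\alpha,n}^{\xi_{ij}}(x_{\xi_{ij},\beta}^{(\ell)})$ and using that $(F_{\beta,\ell}^{\xi_{ij}})^k$ fixes $x_{\xi_{ij},\beta}^{(\ell)}$,
$$
z_k^{(ij)} - w_{ij} \;=\; F_{\alpha,n}^{\xi_{ij}}\!\bigl((F_{\beta,\ell}^{\xi_{ij}})^k(z_k^{(ij)})\bigr) - F_{\alpha,n}^{\xi_{ij}}(x_{\xi_{ij},\beta}^{(\ell)}),
$$
so the contraction estimates for $F_{\alpha,n}^{\xi_{ij}}$ and $(F_{\beta,\ell}^{\xi_{ij}})^k$, together with $|z_k^{(ij)} - x_{\xi_{ij},\beta}^{(\ell)}|_p \leq 1$, give $|z_k^{(ij)} - w_{ij}|_p \leq p^{-(n+k\ell)}$. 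Expanding $y_k - \widetilde F[x] = \sum_{i}\prod_{j} z_k^{(ij)} - \sum_i \prod_j w_{ij}$ exactly as in \eqref{F(x)-F(y)} and applying the strong triangle inequality then yields $|y_k - \widetilde F[x]|_p \leq \max_{i,j}|z_k^{(ij)} - w_{ij}|_p \leq p^{-(n+k\ell)} \to 0$. Hence $\widetilde F[x] \in \overline{\Lambda_0^\xi} = \Lambda^\xi$.

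The only delicate point, and the one I would flag as the main obstacle, is choosing a \emph{single} sequence $\gamma^{(k)} \in \Sigma$ and a \emph{single} exponent $n+k\ell$ that simultaneously realize each composition $F_{\alpha,n}^{\xi_{ij}}\circ (F_{\beta,\ell}^{\xi_{ij}})^k$ in the form $F_{\gamma^{(k)},n+k\ell}^{\xi_{ij}}$; this is what forces $y_k$ to belong to $\Lambda_0^\xi$ rather than just to the closure. Once this uniform coordinatewise structure is in place, the rest reduces to a standard contraction-rate estimate in $\bz_p$ combined with the strong triangle inequality.
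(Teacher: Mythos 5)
Your proof is correct, but it certifies membership in $\Lambda^\xi$ by a different mechanism than the paper. Both arguments revolve around the same composition $F_{\alpha,n}^{\xi_{ij}}\circ\bigl(F_{\beta,\ell}^{\xi_{ij}}\bigr)^{m}$, and both need the concatenation observation you rightly flag as the delicate point: a single sequence $\gamma^{(m)}=(\alpha_1,\dots,\alpha_n,\beta_1,\dots,\beta_\ell,\beta_1,\dots)$ realizes all of these compositions simultaneously as $F_{\gamma^{(m)},\,n+m\ell}^{\xi_{ij}}$ (the paper needs this too, to assert that $F_m=\sum_i\prod_j F_{\alpha,n}^{\xi_{ij}}\circ(F_{\beta,\ell}^{\xi_{ij}})^m$ lies in $\cf_\xi$, though it only says ``it is clear''). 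The divergence is in the last step. The paper applies $F_m$ to an arbitrary $y\in B_r(\widetilde F[x])$, shows $|F_m(y)-\widetilde F[x]|_p\leq p^{-m}$, and concludes that $F_m(B_r(\widetilde F[x]))$ meets the ball for all large $m$, so $\widetilde F[x]$ is not in the discontinuity set $\Omega^\xi$; this verifies the original definition of $\Lambda^\xi$ directly. You instead take the fixed points of the realized compositions, assemble them via \eqref{L_0} into genuine elements $y_k\in\Lambda_0^\xi$, prove $|y_k-\widetilde F[x]|_p\leq p^{-(n+k\ell)}$ using the telescoping identity of \eqref{F(x)-F(y)} and the strong triangle inequality, and invoke Theorem \ref{teor1} to conclude $\widetilde F[x]\in\overline{\Lambda}_0^\xi=\Lambda^\xi$. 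Your route gives the slightly sharper statement that $\widetilde F[x]$ is an explicit limit of points of $\Lambda_0^\xi$ and sidesteps the definition of $\Omega^\xi$ entirely (which the paper states in two not-quite-consistent ways in the introduction and in Section 3); the paper's route has the virtue of not depending on Theorem \ref{teor1}. All of your estimates, including the bound $|z_k^{(ij)}-w_{ij}|_p\leq p^{-(n+k\ell)}$ obtained from the fixed-point equation, are valid.
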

\begin{proof}
Let us establish $\widetilde{F}[x]\in\Lambda^\xi$. Take any $r>0$
and $y\in B_r(F[x])$. Consider the following sequence
$$
F_m=\sum\limits_{i=1}^M\prod\limits_{j=1}^LF_{\alpha,n}^{\xi_{ij}}\circ\left(F_{\beta,l}^{\xi_{ij}}\right)^m.
$$
It is clear that $F_m\in\cf_\xi$ for all $m\geq1$. Then one gets
\begin{eqnarray*}
\left|F_m(y)-\widetilde{F}[x]\right|_p&=&\left|\sum\limits_{i=1}^M\prod\limits_{j=1}^LF_{\alpha,n}^{\xi_{ij}}\circ\left(F_{\beta,\ell}^{\xi_{ij}}\right)^m(y)
-\sum\limits_{i=1}^M\prod\limits_{j=1}^LF_{\alpha,n}^{\xi_{ij}}(x_{\xi_{ij},\beta}^{(\ell)})\right|_p\\
&\leq&\max\limits_{i,j}\left|F_{\alpha,n}^{\xi_{ij}}\circ\left(F_{\beta,\ell}^{\xi_{ij}}\right)^m(y)
-F_{\alpha,n}^{\xi_{ij}}(x_{\xi_{ij},\beta}^{(\ell)})\right|_p\\
&\leq&\frac{1}{p^n}\max\limits_{i,j}\left|(F_{\b,\ell}^{\xi_{ij}})^m(y)-x_{\xi_{ij},\b}^{(\ell)}\right|_p\\
&<&\max\limits_{i,j}\left|(F_{\b,\ell}^{\xi_{ij}})^m(y)-(F_{\b,\ell}^{\xi_{ij}})^m(x_{\xi_{ij},\b}^{(\ell)})\right|_p\leq\frac{1}{p^m}
\max\limits_{i,j}\left|y-x_{\xi_{ij},\b}^{(\ell)}\right|_p\leq\frac{1}{p^m}
\end{eqnarray*}
This means that there exists $m_r\in\bn$ such that
$$
F_m(B_r(F[x]))\cap B_r(x)\neq\emptyset\ \ \ \mbox{for all }m\geq m_r
$$
which yields that $\widetilde{F}[x]\in\Lambda^\xi$.
\end{proof}

\begin{rem}
Since $\Lambda_0^\xi\subset\Lambda^\xi$ a natural question arises: how can we extend the function \eqref{F[x]} to $\Lambda^\xi$?
\end{rem}
Given $\alpha,\beta\in\Sigma$ and $n\in\bn$ we define an element
of $\Sigma$ by
$$
\alpha^{[n]}\vee\beta=(\alpha_1,\dots,\alpha_n,\beta_1,\beta_2,\dots)
$$
\begin{lem}
For each $F_{\alpha,n}\in\cf_{\xi,n}$ and $\beta\in\Sigma$ the
sequence
$$
\left\{\sum_{i=1}^M\prod_{j=1}^LF_{\alpha,n}^{\xi_{ij}}(x_{\xi_{ij},\beta}^{(m)})\right\}_{m\in\bn}
$$
is Cauchy. Here, as before, $x_{\xi_{ij},\beta}^{(m)}$ is a fixed
point of $F_{\beta,m}^{\xi_{ij}}$.
\end{lem}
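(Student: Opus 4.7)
The plan is to show directly that the sequence is Cauchy in $\bz_p$, reusing the same telescoping identity for products of $p$-adic functions employed in \eqref{F(x)-F(y)} and in the proof of Theorem~\ref{teor1}, combined with the estimates already obtained for the individual fixed-point sequences in the course of Lemma~\ref{xaaa}.

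First, for indices $m'>m$, I would rewrite the difference
$$\sum_{i=1}^{M}\prod_{j=1}^{L}F_{\alpha,n}^{\xi_{ij}}(x_{\xi_{ij},\beta}^{(m')})-\sum_{i=1}^{M}\prod_{j=1}^{L}F_{\alpha,n}^{\xi_{ij}}(x_{\xi_{ij},\beta}^{(m)})$$
as a telescoping double sum in which each summand equals $\bigl[F_{\alpha,n}^{\xi_{ij}}(x_{\xi_{ij},\beta}^{(m')})-F_{\alpha,n}^{\xi_{ij}}(x_{\xi_{ij},\beta}^{(m)})\bigr]$ multiplied by side products of factors lying in $\bz_p$. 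The strong triangle inequality, together with Lemma~\ref{Flemma}(a) (which bounds those side factors in $p$-adic norm by $1$), then collapses the estimate to
$$\max_{i,j}\left|F_{\alpha,n}^{\xi_{ij}}(x_{\xi_{ij},\beta}^{(m')})-F_{\alpha,n}^{\xi_{ij}}(x_{\xi_{ij},\beta}^{(m)})\right|_p.$$

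Next, since each $F_{\alpha,n}^{\xi_{ij}}$ is a composition of $n$ contractions on $\bz_p$, Lemma~\ref{Flemma}(b) supplies the contraction factor $1/p^n$, reducing the bound to
$$\frac{1}{p^n}\max_{i,j}\left|x_{\xi_{ij},\beta}^{(m')}-x_{\xi_{ij},\beta}^{(m)}\right|_p.$$
The computation inside the proof of Lemma~\ref{xaaa}, applied separately to each coordinate-wise fixed-point sequence, already furnishes the estimate $\bigl|x_{\xi_{ij},\beta}^{(m')}-x_{\xi_{ij},\beta}^{(m)}\bigr|_p\leq 1/p^m$, since $x_{\xi_{ij},\beta}^{(m')}=F_{\beta,m'}^{\xi_{ij}}(x_{\xi_{ij},\beta}^{(m')})$ and $F_{\beta,m}^{\xi_{ij}}$ is a $(1/p^m)$-contraction whose first $m$ factors agree with those of $F_{\beta,m'}^{\xi_{ij}}$. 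Putting the two estimates together yields a bound of $1/p^{n+m}$, which tends to $0$ as $m\to\infty$, so the sequence is Cauchy and hence converges in $\bz_p$ by completeness.

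The only real (and very mild) obstacle is the careful bookkeeping of the product-telescoping identity; since this is literally the manipulation carried out in \eqref{F(x)-F(y)} and in the proof of Theorem~\ref{teor1}, the argument is essentially a direct combination of the contraction constant from Lemma~\ref{Flemma}(b) with the coordinate-wise Cauchy bound implicit in the proof of Lemma~\ref{xaaa}. No ingredient beyond these is required.
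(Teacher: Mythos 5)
Your argument is correct and follows essentially the same route as the paper's proof: telescoping the difference of products, applying the strong triangle inequality to reduce to $\max_{i,j}\bigl|F_{\alpha,n}^{\xi_{ij}}(x_{\xi_{ij},\beta}^{(m')})-F_{\alpha,n}^{\xi_{ij}}(x_{\xi_{ij},\beta}^{(m)})\bigr|_p$, pulling out the contraction factor $1/p^n$, and then invoking the coordinate-wise bound $\bigl|x_{\xi_{ij},\beta}^{(m')}-x_{\xi_{ij},\beta}^{(m)}\bigr|_p\leq 1/p^{\min\{m,m'\}}$ from Lemma~\ref{xaaa} to obtain $1/p^{n+\min\{m,m'\}}\to 0$. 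The paper states this chain of inequalities more tersely, but the content is identical.
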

\begin{proof} The proof immediately follows from
\begin{eqnarray*}
\left|\sum\limits_i\prod\limits_jF_{\alpha,n}^{\xi_{ij}}(x_{\xi_{ij},\beta}^{(m)})-
\sum\limits_i\prod\limits_jF_{\alpha,n}^{\xi_{ij}}(x_{\xi_{ij},\beta}^{(l)})\right|_p&\leq&\max\limits_{i,j}
\left|F_{\alpha,n}^{\xi_{ij}}(x_{\xi_{ij},\beta}^{(m)})-F_{\alpha,n}^{\xi_{ij}}(x_{\xi_{ij},\beta}^{(l)})\right|_p\\[2mm]
&\leq&\frac{1}{p^n}\max\limits_{i,j}\left|x_{\xi_{ij},\beta}^{(m)}-x_{\xi_{ij},\beta}^{(l)}\right|_p\\[2mm]
&\leq&\frac{1}{p^{n+\min\{m,l\}}}\to0
\end{eqnarray*}
\end{proof}
For any $F=\sum\limits_{i=1}^M\prod\limits_{j=1}^LF_{\alpha,n}^{\xi_{ij}}\in\cf_{\xi}$ we define
\begin{equation}\label{F[x]2}
\widetilde{F}[x_{\beta}]:=\lim_{m\to\infty}\sum_{i=1}^M\prod_{j=1}^LF_{\alpha,n}^{\xi_{ij}}(x_{\xi_{ij},\beta}^{(m)})
\end{equation}
\begin{thm}\label{teor5}
For any
$F=\sum\limits_{i=1}^M\prod\limits_{j=1}^LF_{\alpha,n}^{\xi_{ij}}\in\cf_{\xi}$
one has
$$
\widetilde{F}[x_\beta]=x_{\alpha^{[n]}\vee\beta}.
$$
Moreover, $\widetilde{F}[\Lambda^\xi]\subset\Lambda^\xi$.
\end{thm}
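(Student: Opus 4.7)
The strategy is to identify $\widetilde{F}[x_\beta]$ with $x_{\alpha^{[n]}\vee\beta}$ by matching both against the finite‐stage approximants $x_\gamma^{(n+m)}$, $\gamma=\alpha^{[n]}\vee\beta$, and controlling the discrepancy in the non-Archimedean norm. The inclusion $\widetilde{F}[\Lambda^\xi]\subset\Lambda^\xi$ will then be an immediate consequence of Theorem \ref{Lambda=tildeLambda}.

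\medskip

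\noindent\textbf{Step 1 (composition identity).} First, I would note directly from \eqref{Falphan} that, writing $\gamma=\alpha^{[n]}\vee\beta=(\alpha_1,\dots,\alpha_n,\beta_1,\beta_2,\dots)$,
$$
F_{\gamma,\,n+m}^{\xi_{ij}}=f_{\xi_{ij}(\alpha_1)}\circ\cdots\circ f_{\xi_{ij}(\alpha_n)}\circ f_{\xi_{ij}(\beta_1)}\circ\cdots\circ f_{\xi_{ij}(\beta_m)}=F_{\alpha,n}^{\xi_{ij}}\circ F_{\beta,m}^{\xi_{ij}}.
$$

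\noindent\textbf{Step 2 (fixed-point approximation).} Let $x_{\xi_{ij},\beta}^{(m)}$ be the fixed point of $F_{\beta,m}^{\xi_{ij}}$ and $x_{\xi_{ij},\gamma}^{(n+m)}$ be the fixed point of $F_{\gamma,n+m}^{\xi_{ij}}=F_{\alpha,n}^{\xi_{ij}}\circ F_{\beta,m}^{\xi_{ij}}$. Using the fixed-point equation and the two contraction rates $p^{-n}$ and $p^{-m}$ of $F_{\alpha,n}^{\xi_{ij}}$ and $F_{\beta,m}^{\xi_{ij}}$ (these follow from Lemma \ref{Flemma} and \eqref{Falphan}), I would compute
$$
\bigl|x_{\xi_{ij},\gamma}^{(n+m)}-F_{\alpha,n}^{\xi_{ij}}(x_{\xi_{ij},\beta}^{(m)})\bigr|_p
=\bigl|F_{\alpha,n}^{\xi_{ij}}\bigl(F_{\beta,m}^{\xi_{ij}}(x_{\xi_{ij},\gamma}^{(n+m)})\bigr)-F_{\alpha,n}^{\xi_{ij}}(x_{\xi_{ij},\beta}^{(m)})\bigr|_p\leq \frac{1}{p^{n+m}},
$$
where after pulling the $p^{-n}$ factor out one re-applies the contractivity of $F_{\beta,m}^{\xi_{ij}}$ and uses that both fixed points lie in $\bz_p$.

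\medskip

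\noindent\textbf{Step 3 (combine into $\sum\prod$).} Using the same telescoping identity for $\prod_j a_j-\prod_j b_j$ that appears in the proof of Lemma \ref{Flemma}, together with the strong triangle inequality over the $i$-sum, Step 2 gives
$$
\Bigl|\,x_\gamma^{(n+m)}-\sum_{i=1}^M\prod_{j=1}^L F_{\alpha,n}^{\xi_{ij}}(x_{\xi_{ij},\beta}^{(m)})\,\Bigr|_p\leq \max_{i,j}\bigl|x_{\xi_{ij},\gamma}^{(n+m)}-F_{\alpha,n}^{\xi_{ij}}(x_{\xi_{ij},\beta}^{(m)})\bigr|_p\leq \frac{1}{p^{n+m}}.
$$
Letting $m\to\infty$, Lemma \ref{xaaa} gives $x_\gamma^{(n+m)}\to x_\gamma=x_{\alpha^{[n]}\vee\beta}$, while the definition \eqref{F[x]2} gives that the subtracted term converges to $\widetilde{F}[x_\beta]$. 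Hence $\widetilde{F}[x_\beta]=x_{\alpha^{[n]}\vee\beta}$.

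\medskip

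\noindent\textbf{Step 4 (invariance of $\Lambda^\xi$).} By Theorem \ref{Lambda=tildeLambda}, $\Lambda^\xi=\widetilde{\Lambda}^\xi=\{x_\beta:\beta\in\Sigma\}$, so every point of $\Lambda^\xi$ can be written $x_\beta$ for some $\beta\in\Sigma$. The first part of the theorem then gives $\widetilde{F}[x_\beta]=x_{\alpha^{[n]}\vee\beta}\in\widetilde{\Lambda}^\xi=\Lambda^\xi$, so $\widetilde{F}[\Lambda^\xi]\subset\Lambda^\xi$.

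\medskip

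The only substantive obstacle is the fixed-point estimate in Step 2: one has to recognise that $F_{\alpha,n}^{\xi_{ij}}(x_{\xi_{ij},\beta}^{(m)})$ is not itself a fixed point of the composition, and bound the deficit using the two contraction rates separately. Everything else is bookkeeping with the non-Archimedean triangle inequality and the $\sum\prod$ expansion already used in Lemma \ref{Flemma}.
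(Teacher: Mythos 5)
Your proof is correct and follows essentially the same route as the paper: both identify $\widetilde{F}[x_\beta]$ with $x_{\alpha^{[n]}\vee\beta}$ by comparing $F_{\alpha,n}^{\xi_{ij}}(x_{\xi_{ij},\beta}^{(m)})$ with the fixed point of the concatenated composition, pulling out the contraction factor $p^{-n}$ and then re-using the fixed-point equation and contractivity of the $\beta$-part, before passing the estimate through the $\sum\prod$ telescoping and the strong triangle inequality. The only difference is a cosmetic re-indexing (your concatenated word has length $n+m$ where the paper uses length $m$ with a $\beta$-tail of length $m-n$), and your Step 4 makes explicit the one-line deduction of the "moreover" clause from Theorem \ref{Lambda=tildeLambda} that the paper leaves implicit.
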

\begin{proof}
By definition we have
\begin{equation}\label{x_alpha,beta}
x_{\alpha^{[n]}\vee\beta}=\lim_{m\to\infty}\sum_{i=1}^M\prod_{j=1}^Lx_{\xi_{ij},\alpha^{[n]}\vee\beta}^{(m)},
\end{equation}
where $x_{\xi_{ij},\alpha^{[n]}\vee\beta}^{(m)}$ is a fixed point of
$$
\underbrace{f_{\xi_{ij}(\alpha_1)}\circ\cdots\circ f_{\xi_{ij}(\alpha_n)}}_{F_{\alpha,n}^{\xi_{ij}}}
\circ\underbrace{f_{\xi_{ij}(\beta_1)}\circ\cdots\circ f_{\xi_{ij}(\beta_{m-n})}}_{F_{\beta,m-n}^{\xi_{ij}}}
$$
On the other hand, one has
\begin{equation}\label{1123}
\widetilde{F}[x_\beta]=\lim_{m\to\infty}\sum_{i=1}^M\prod_{j=1}^LF_{\alpha,n}^{\xi_{ij}}(x_{\xi_{ij},\beta}^{(m)})
\end{equation}
Hence, one gets
\begin{eqnarray*}
\left|\sum\limits_{i}\prod\limits_{j}F_{\alpha,n}^{\xi_{ij}}(x_{\xi_{ij},\beta}^{(m)})-
\sum\limits_{i}\prod\limits_{j}x_{\xi_{ij},\alpha^{[n]}\vee\beta}^{(m)}\right|_p
&\leq&\max\limits_{i,j}\left|F_{\alpha,n}^{\xi_{ij}}(x_{\xi_{ij},\beta}^{(m)})-x_{\xi_{ij},\alpha^{[n]}\vee\beta}^{(m)}\right|_p\\[2mm]
&=&\max\limits_{i,j}\left|F_{\alpha,n}^{\xi_{ij}}(x_{\xi_{ij},\beta}^{(m)})-F_{\alpha,n}^{\xi_{ij}}\circ F_{\beta,m-n}^{\xi_{ij}}
(x_{\xi_{ij},\alpha^{[n]}\vee\beta}^{(m)})\right|_p\\
&\leq&\frac{1}{p^n}\max\limits_{i,j}\left|F_{\beta,m}^{\xi_{ij}}(x_{\xi_{ij},\beta}^{(m)})-
F_{\beta,m-n}^{\xi_{ij}}(x_{\xi_{ij},\alpha^{[n]}\vee\beta}^{(m)})\right|_p\\
&\leq&\frac{1}{p^m}
\end{eqnarray*}
Hence, from \eqref{x_alpha,beta} and \eqref{1123} we find the
desired equality. This completes the proof.
\end{proof}

\begin{thm}
If $\Lambda^\xi$ contains at least two points than it is perfect.
\end{thm}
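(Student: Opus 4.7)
The plan is to combine the compactness of $\Lambda^\xi$ (Theorem~\ref{Lambda=tildeLambda}) with an approximation argument showing that $\Lambda^\xi$ has no isolated points. Since compactness implies closedness, perfectness then follows.

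Fix $x_\alpha\in\Lambda^\xi$. Using $|\Lambda^\xi|\geq 2$, choose $\gamma\in\Sigma$ with $x_\gamma\neq x_\alpha$, and for each $n\geq 1$ set $\alpha^{(n)}:=\alpha^{[n]}\vee\gamma$ and $y_n:=x_{\alpha^{(n)}}$. By Theorem~\ref{teor5}, $y_n=\widetilde{F}_{\alpha,n}[x_\gamma]\in\Lambda^\xi$. Since $\alpha^{(n)}$ and $\alpha$ agree on the first $n$ coordinates, the continuity estimate \eqref{lim1} from the proof of Theorem~\ref{Lambda=tildeLambda} gives $|y_n-x_\alpha|_p\leq p^{-n}$, so $y_n\to x_\alpha$.

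The decisive step --- and the main obstacle --- is to show that $y_n\neq x_\alpha$ for infinitely many $n$. I plan to argue by contradiction: if $y_n=x_\alpha$ for all $n\geq n_0$, then, running the same construction with an arbitrary $\gamma'\in\Sigma$ and using that isolation of $x_\alpha$ would make $\{x_\alpha\}$ open in $\Lambda^\xi$, continuity of the parametrization $\alpha\mapsto x_\alpha$ forces the preimage of $x_\alpha$ in $\Sigma$ to be open, and hence to contain an entire cylinder $C^{(N)}(\alpha):=\{\sigma\in\Sigma:\sigma_k=\alpha_k,\ k\leq N\}$. By Theorem~\ref{teor5} this collapses $\widetilde{F}_{\alpha,N}[x_\beta]$ to the value $x_\alpha$ for \emph{every} $\beta\in\Sigma$, i.e.\ $\widetilde{F}_{\alpha,N}$ maps $\Lambda^\xi$ to the single point $x_\alpha$.

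To close the argument I would pass the telescoping identity from the proof of Theorem~\ref{teor5} to the limit in $m$, which, setting $y_{\xi_{ij},\beta}:=\lim_{m\to\infty}x_{\xi_{ij},\beta}^{(m)}$, yields
\[
\widetilde{F}_{\alpha,N}[x_\beta]-\widetilde{F}_{\alpha,N}[x_{\beta'}]=\sum_{i,j}\bigl(F_{\alpha,N}^{\xi_{ij}}(y_{\xi_{ij},\beta})-F_{\alpha,N}^{\xi_{ij}}(y_{\xi_{ij},\beta'})\bigr)\prod_{k>j}F_{\alpha,N}^{\xi_{ik}}(y_{\xi_{ik},\beta})\prod_{l<j}F_{\alpha,N}^{\xi_{il}}(y_{\xi_{il},\beta'}).
\]
For distinct $x_\beta\neq x_{\beta'}$ in $\Lambda^\xi$ (which exist by hypothesis), at least one pair $(i,j)$ must satisfy $y_{\xi_{ij},\beta}\neq y_{\xi_{ij},\beta'}$; combining the strong triangle inequality with the contractivity $|F_{\alpha,N}^{\xi_{ij}}(x)-F_{\alpha,N}^{\xi_{ij}}(y)|_p\leq p^{-N}|x-y|_p$, I would try to isolate a dominant non-vanishing term in this expansion, contradicting the forced equality $\widetilde{F}_{\alpha,N}[x_\beta]=\widetilde{F}_{\alpha,N}[x_{\beta'}]=x_\alpha$. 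Ruling out possible cancellations in this sum--product structure is where the principal technical difficulty lies.
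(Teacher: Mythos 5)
Your construction is, at bottom, the same one the paper uses. The paper fixes $x=\sum_i\prod_j x^{(n)}_{\xi_{ij},\alpha}\in\Lambda_0^\xi$ (which suffices, since $\Lambda^\xi=\overline{\Lambda}_0^\xi$), picks $y_\beta\in\widetilde{\Lambda}^\xi\setminus B_r(x)$, forms the element $F=\sum_i\prod_j(F^{\xi_{ij}}_{\alpha,n})^m\in\cf_\xi$ by repeating the block $\alpha_1\cdots\alpha_n$ exactly $m$ times with $p^{-m}<r/2$, and shows $|\widetilde{F}[y_\beta]-x|_p<r$; your $y_n=x_{\alpha^{[n]}\vee\gamma}=\widetilde{F}_{\alpha,n}[x_\gamma]$ with $|y_n-x_\alpha|_p\le p^{-n}$ is the same mechanism, phrased through Theorem \ref{teor5} instead of through the iterated map.

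However, the proposal does not prove the theorem, and the gap is exactly the one you flag yourself: you never show that $y_n\ne x_\alpha$ for infinitely many $n$, and perfectness requires a point of $B_r(x_\alpha)\cap\Lambda^\xi$ \emph{other than} $x_\alpha$. The contradiction route you sketch stalls at a real obstacle, not a routine verification. From isolation of $x_\alpha$ you do get, via continuity of $\pi$ and compactness of $\Sigma$, that $\pi^{-1}(x_\alpha)$ contains a cylinder $C^{(N)}(\alpha)$, hence that $\widetilde{F}_{\alpha,N}$ is constant on $\Lambda^\xi$; but that is not self-evidently absurd. The parametrization $\pi$ is not known to be injective, and $\widetilde{F}_{\alpha,N}$ is a sum of products of contractions, which need not be injective either (already for $M=1$, $L=2$ with two equal factors it is a square composed with a contraction, and squaring identifies $u$ with $-u$). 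So the ``dominant non-vanishing term'' you hope to isolate in the telescoped difference may genuinely fail to exist, and you offer no argument excluding such cancellation. Until that step is supplied the proof is incomplete. For what it is worth, you have located a real subtlety: the paper's own argument likewise only establishes $\widetilde{F}[y_\beta]\in B_r(x)$ and never verifies $\widetilde{F}[y_\beta]\ne x$, so the distinctness issue is passed over there as well rather than resolved.
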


\begin{proof}
Let $\Lambda^\xi$ contain at least two points. Since
$\Lambda^\xi=\widetilde{\Lambda}^\xi$ and
$\overline{\Lambda}_0^\xi=\widetilde{\Lambda}^\xi$ it is enough to
show that each $x\in\Lambda_0^\xi$ is not isolated point of
$\widetilde{\Lambda}^\xi$.

Let $x\in\Lambda_0^\xi$. Then there exist $\a\in\Sigma$ and a
positive integer $n$ such that
$$
x=\sum_{i=1}^M\prod_{j=1}^Lx_{\xi_{ij},\a}^{(n)},
$$
where $x_{\xi_{ij},\a}^{(n)}$ is a fixed point of $F_{\a,n}^{\xi_{ij}}$. It is clear that
$$
x=\sum_{i=1}^M\prod_{j=1}^L\left(F_{\a,n}^{\xi_{ij}}\right)^m(x_{\xi_{ij},\a}^{(n)})\ \ \ \mbox{for all }m\geq1
$$
Take any $r>0$ and $y_\b\in\widetilde{\Lambda}^\xi\setminus
B_r(x)$. Choose a positive integer $m$ such that
$\frac{1}{p^m}<\frac{r}{2}$. Take $\g\in\Sigma$ such that
$\g_{jn+i}=\a_i$ for all $i=\overline{1,n}$ and
$j=\overline{0,m-1}$.

Note that
$$
F_{\g,mn}^{\xi_{ij}}=\underbrace{\left(f_{\xi_{ij}(\a_1)}\circ\dots\circ f_{\xi_{ij}(\a_n)}\right)\circ\dots\circ
\left(f_{\xi_{ij}(\a_1)}\circ\dots\circ f_{\xi_{ij}(\a_n)}\right)}_m=(F_{\a,n}^{\xi_{ij}})^m
$$
So,
$$
F=\sum_{i=1}^M\prod_{j=1}^LF_{\g,mn}^{\xi_{ij}}\in\cf_\xi.
$$
Due to Theorem \ref{teor5} we have
$$
\widetilde{F}[y_\b]=\lim_{k\to\infty}\sum_{i=1}^M\prod_{j=1}^LF_{\g,mn}^{\xi_{ij}}(y_{\xi_{ij},\b}^{(k)})
$$
and belongs to $\widetilde{\Lambda}^\xi$. Now choose $k\geq1$ such
that
\begin{equation}\label{perf1}
\left|\widetilde{F}[y_\b]-\sum_{i=1}^M\prod_{j=1}^LF_{\g,mn}^{\xi_{ij}}(y_{\xi_{ij},\b}^{(k)})\right|_p<\frac{r}{2}.
\end{equation}
One can see that
\begin{eqnarray}\label{perf2}
\widetilde{F}[y_\b]-x&=&\widetilde{F}[y_\b]-\sum_{i=1}^M\prod_{j=1}^LF_{\g,mn}^{\xi_{ij}}(y_{\xi_{ij},\b}^{(k)})
+\sum_{i=1}^M\prod_{j=1}^LF_{\g,mn}^{\xi_{ij}}(y_{\xi_{ij},\b}^{(k)})\nonumber\\
&&-\sum_{i=1}^M\prod_{j=1}^L(F_{\a,n}^{\xi_{ij}})^m(x_{\xi_{ij},\a}^{(n)})\nonumber\\[2mm]
&=&\widetilde{F}[y_\b]-\sum_i\prod_jF_{\g,mn}^{\xi_{ij}}(y_{\xi_{ij},\b}^{(k)})\nonumber\\
&&+\sum_{i}\sum_j\left[F_{\g,mn}^{\xi_{ij}}(y_{\xi_{ij},\b}^{(k)})
-F_{\g,mn}^{\xi_{ij}}(x_{\xi_{ij},\a}^{(n)})\right]
\prod_{l>j\atop{u<j}}F_{\g,mn}^{\xi_{il}}(y_{\xi_{il},\b}^{(k)})F_{\g,mn}^{\xi_{iu}}(x_{\xi_{iu},\a}^{(n)})
\end{eqnarray}
Noting
$$
\left|F_{\g,mn}^{\xi_{ij}}(y_{\xi_{ij},\b}^{(k)})
-F_{\g,mn}^{\xi_{ij}}(x_{\xi_{ij},\a}^{(n)})\right|_p\leq\frac{1}{p^{mn}}<\frac{1}{p^m}<\frac{r}{2}.
$$
and using \eqref{perf1} and the strong triangle inequality from
\eqref{perf2} we obtain
$$
\left|\widetilde{F}[y_\b]-x\right|_p<r
$$
Hence, $\widetilde{F}[y_\b]\in\widetilde{\Lambda}^\xi\cap B_r(x)$. This means that $x$ is not isolated point of
$\widetilde{\Lambda}^\xi$. Consequently, $\Lambda^\xi$ is perfect.
\end{proof}

\begin{thm}\label{dud} The limiting set $\Lambda^\xi$ is doubling and uniformly disconnected.
\end{thm}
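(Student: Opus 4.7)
The key observation is that both conclusions are essentially inherited from the ultrametric structure of $\bz_p$, which $\Lambda^\xi$ carries through its induced metric. Nonzero $p$-adic norms take values in $\{p^{-n}:n\in\bz\}$, so every closed disk in $\bz_p$ is an additive coset, $B_{p^{-n}}(a)=a+p^n\bz_p$, and admits the finite decomposition
\[
B_{p^{-n}}(a)=\bigsqcup_{j=0}^{p-1}\bigl(a+jp^n+p^{n+1}\bz_p\bigr)=\bigsqcup_{j=0}^{p-1}B_{p^{-n-1}}(a+jp^n).
\]
The plan is to derive both properties from this single decomposition, with constants depending only on $p$.

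For the doubling property, I would fix a disk $B_r(x)\cap\Lambda^\xi$ of $\Lambda^\xi$ and replace $r$ by the unique $p^{-n}$ satisfying $p^{-n}\le r<p^{-n+1}$, so that $B_r(x)=B_{p^{-n}}(x)$. Intersecting the decomposition above with $\Lambda^\xi$ produces at most $p$ nonempty pieces; in each nonempty coset choose a point $y_j\in\Lambda^\xi$, so that the piece equals $B_{p^{-n-1}}(y_j)\cap\Lambda^\xi$. Since $p^{-n-1}=p^{-n}/p\le p^{-n}/2\le r/2$ for every prime $p$, each such piece is contained in $B_{r/2}(y_j)\cap\Lambda^\xi$, i.e.\ in a disk of $\Lambda^\xi$ of radius $r/2$ centred at a point of $\Lambda^\xi$. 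Hence $\Lambda^\xi$ is doubling with constant at most $p$.

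For uniform disconnectedness I would take $C=p$ and, for each $x\in\Lambda^\xi$ and $r>0$, set $A:=B_r(x)\cap\Lambda^\xi$. This is closed in $\Lambda^\xi$ since $B_r(x)$ is clopen in $\bz_p$, and the inclusions $B_{r/C}(x)\cap\Lambda^\xi\subset A\subset B_r(x)\cap\Lambda^\xi$ are immediate. To estimate the separation, observe that any $z\in\Lambda^\xi\setminus A$ lies outside $B_r(x)$, so $|z-x|_p>r$, while any $y\in A$ satisfies $|y-x|_p\le r<|z-x|_p$; the strong triangle inequality then forces $|z-y|_p=|z-x|_p>r$. Consequently $\mathrm{dist}(A,\Lambda^\xi\setminus A)\ge r\ge C^{-1}r$, as required.

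Because the whole argument reduces to elementary ultrametric geometry on the cosets of $p^n\bz_p$, I do not anticipate any substantial obstacle. The only mild point that needs attention is that radii in the definitions are arbitrary positive reals, whereas disks in $\bz_p$ realise only the discrete radii $p^{-n}$; this is handled by passing from $r$ to the corresponding $p^{-n}$ with $B_r(x)=B_{p^{-n}}(x)$ before invoking the coset decomposition.
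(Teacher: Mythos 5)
Your proof is correct, and for uniform disconnectedness it is essentially the paper's argument: the paper also takes $A=B_r(a)\cap\Lambda^\xi$ and uses the strong triangle inequality to get $|x-y|_p=|y-a|_p>r$ for $x\in A$ and $y\in\Lambda^\xi\setminus A$, only with the constant $C=2$ where you use $C=p$ (both are admissible). The one place you genuinely diverge is the doubling property: the paper disposes of it in a single sentence by citing the fact, recorded in its preliminaries, that $\mathbb Q_p$ is doubling and that subsets of doubling spaces are doubling, whereas you prove it from scratch via the coset decomposition $B_{p^{-n}}(a)=\bigsqcup_{j=0}^{p-1}B_{p^{-n-1}}(a+jp^n)$, obtaining the explicit doubling constant $p$. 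Your version is more self-contained and quantitative; the paper's is shorter but leans on an unproved assertion. Your remark about replacing an arbitrary radius $r$ by the realized radius $p^{-n}$ with $B_r(x)=B_{p^{-n}}(x)$ is a detail the paper glosses over, and it is handled correctly.
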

\begin{proof}
Note that $\mathbb Q_p$ is a doubling. Since $\Lambda^\xi\subset\mathbb Z_p$, the limiting set also has doubling property.
For every $a\in\Lambda^\xi$ and $r>0$, let $A=B_r(a)$. Then $B_{r/2}(a)\subset A\subset B_r(a)$. For any
$y\in\Lambda^\xi\setminus B_r(a)$ and $x\in B_r(a)$, by the strong triangle equality, we have $|x-y|_p=|y -a|_p\geq r>r/2$,
namely $\mbox{dist}(B_r(a),\Lambda^\xi\setminus B_r(a))\geq r/2$, which shows that $\Lambda^\xi$ is uniformly disconnected.
\end{proof}

\section{examples}

In the section we provide certain examples of contractive mappings
for which the unconventional limiting set is quasi-symmetrically
equivalent to the symbolic Cantor set $F^\infty$. But, in general,
such kind of result is unknown.

 Let $f_1(x)=px$ and $f_2(x)=px+1-p$. It
is clear that $f_i$ is a contractive mapping on $\mathbb Z_p$. For
convenience, we denote
$$
f_{i_1i_2\dots i_n}:=f_{i_1}\circ f_{i_2}\circ\dots\circ f_{i_n}
$$
and by $x_{i_1i_2\cdots i_n}$ fixed point of the mapping
$f_{i_1i_2\dots i_n}$.
Note that $x_1=0$ and $x_2=1$.

For any $x\in\mathbb Z_p$ we get
$$
f_{i_1i_2\dots i_n}(x)=p^nx+(1-p)(\d_{2i_1}+\d_{2i_2}p+\cdots+\d_{2i_n}p^{n-1}),
$$
where $\d_{2i_k}$ is a Kroneker symbol. It follows that
\begin{equation}\label{x_iii}
x_{i_1i_2\cdots i_n}=\frac{\d_{2i_1}+\d_{2i_2}p+\cdots+\d_{2i_n}p^{n-1}}{1+p+p^2+\cdots+p^{n-1}}.
\end{equation}
Let $\Lambda_0=\{x_{i_1i_2\cdots i_n}: \ \{i_1,i2\dots
i_n\}\in\{1,2\}, \ n\in\bn\}$. It is clear that
$\Lambda_0\subset\mathbb Z_p$. Note that $x\in\Lambda_0$ if and
only if $1-x\in\Lambda_0$. Denote
$\Lambda:=\overline{\Lambda}_{0}$.
\begin{lem}  The set $\Lambda$ has the
following properties:
\begin{enumerate}
\item[$(\Lambda.1)$] $x\in\Lambda$ if and only if $x$ has the
following canonical form
$$
x=p^{\gamma(x)}(1+x_1\cdot p+x_2\cdot p^2+\dots),
$$
where $\gamma(x)\in\mathbb Z_+$ and $x_i\in\{0,p-1\},\ i=1,2,3,\dots$

\item[$(\Lambda.2)$] The set $\Lambda$ is uniformly perfect.
\end{enumerate}
\end{lem}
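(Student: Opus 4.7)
My plan is to combine the explicit fixed-point formula \eqref{x_iii} with Theorem~\ref{Lambda=tildeLambda}, which in this setting identifies $\Lambda$ with $\widetilde{\Lambda} = \{x_\alpha : \alpha \in \Sigma\}$ where $\Sigma = \{1,2\}^{\bn}$ and $x_\alpha = \lim_{n}x_{\alpha_1\cdots\alpha_n}$. Rewriting \eqref{x_iii} as $x_{\alpha_1\cdots\alpha_n} = (1-p)\,A_n/(1-p^n)$ with $A_n = \sum_{k=1}^n \delta_{2\alpha_k}p^{k-1}$ and passing to the $\bz_p$-limit (since $1-p^n \to 1$ and the series $A_n$ converges term-by-term as $|\delta_{2\alpha_k}p^{k-1}|_p \le p^{-(k-1)}$), I obtain the closed form
\[
x_\alpha = (1-p)\,A_\alpha, \qquad A_\alpha := \sum_{k\,:\,\alpha_k = 2} p^{k-1}.
\]

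For $(\Lambda.1)$, I convert this to canonical form. Let $M = \{k-1 : \alpha_k = 2\}$ and $z := (p-1)A_\alpha = \sum_{m \in M}(p-1)p^m$. Distinct elements of $M$ contribute single digits $p-1$ at distinct positions, so $z$ has the carry-free expansion with digit $p-1$ at each $m \in M$ and $0$ elsewhere. Assuming $M \neq \emptyset$ (the empty case, $\alpha \equiv 1$, yields $x_\alpha = 0$), let $M_0 = \min M$. The standard formula for $p$-adic negation then gives
\[
x_\alpha = -z = p^{M_0} + (p-1)\sum_{\substack{m > M_0 \\ m \notin M}} p^m,
\]
which is precisely $p^{\gamma}\bigl(1 + \sum_{l \geq 1} x_l p^l\bigr)$ with $\gamma := M_0$ and $x_l := (p-1)\chi_{\{l+\gamma \notin M\}} \in \{0, p-1\}$. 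Conversely, given any $(\gamma,(x_l))$ with $\gamma \in \bz_+$ and $x_l \in \{0,p-1\}$, I reverse the recipe: define $\alpha \in \Sigma$ by $\alpha_k = 1$ for $k \leq \gamma$, $\alpha_{\gamma+1} = 2$, and for $k > \gamma + 1$ put $\alpha_k = 1$ (resp.\ $2$) if $x_{k-\gamma-1} = p-1$ (resp.\ $0$); running the computation backwards shows $x_\alpha = p^{\gamma}(1 + \sum x_l p^l)$.

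For $(\Lambda.2)$, the inclusions $\{0,1\} \subset \Lambda \subset \bz_p$ force $\mathrm{diam}(\Lambda) = 1$. Fix $x_0 = x_{\alpha^{(0)}} \in \Lambda$ and $r \in (0,1)$, and let $n \geq 0$ satisfy $p^{-n-1} < r \leq p^{-n}$. I choose $\alpha \in \Sigma$ agreeing with $\alpha^{(0)}$ on positions $1,\dots,n+1$ but with $\alpha_{n+2} \neq \alpha^{(0)}_{n+2}$ (positions beyond $n+2$ arbitrary). Then
\[
x_\alpha - x_0 = (1-p)\sum_{k \geq n+2,\,\alpha_k \neq \alpha^{(0)}_k} (\pm 1)\,p^{k-1},
\]
whose leading term has $p$-adic norm $p^{-(n+1)}$; by the strong triangle inequality $|x_\alpha - x_0|_p = p^{-(n+1)}$. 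For the boundary case $r = p^{-n}$ one instead flips at position $n+1$ to obtain distance exactly $p^{-n} = r$. In either case the achieved distance lies in $[r/p,\,r]$, so the annulus $\{cr \leq |x - x_0|_p \leq r\}$ with $c = 1/p$ meets $\Lambda$, establishing uniform perfectness.

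The main obstacle lies in the negation step of $(\Lambda.1)$: the special structure that the digits of $A_\alpha$ already lie in $\{0,1\}$ (from the indicator $\delta_{2\alpha_k}$) is precisely what produces, after negation, an expansion of $x_\alpha$ whose post-leading digits are forced into $\{0,p-1\}$. Without this carry-free starting expansion the closed-form computation would be considerably more delicate.
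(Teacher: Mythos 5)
Your proof is correct, but it takes a genuinely different route from the paper's for part $(\Lambda.1)$. The paper never computes $x_\alpha$ in closed form: it works directly with the approximation condition $|x-x_{i_1\cdots i_{n+1}}|_p<p^{-n}$ coming from $\Lambda=\overline{\Lambda}_0$, and rules out forbidden digits one at a time by contradiction (first showing the leading digit is $1$, then an inductive carry computation showing that a first digit outside $\{0,p-1\}$ forces $a_n(x)\geq p^{-(k+1)}$ for all $n$), before exhibiting an approximating sequence for the converse. You instead sum the geometric series in \eqref{x_iii} to get the closed form $x_\alpha=(1-p)\sum_{\alpha_k=2}p^{k-1}$ and read off the canonical expansion by the explicit $p$-adic negation formula; the key observation that the carry-free $\{0,p-1\}$ expansion of $(p-1)A_\alpha$ negates termwise into a $\{0,p-1\}$ expansion with leading digit $1$ is exactly right, and it makes the correspondence $\alpha\leftrightarrow(\gamma,(x_l))$ an explicit bijective coding rather than a digit-exclusion argument. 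This is cleaner and arguably more informative than the paper's computation (which is also marred by typos such as $\Lambda_1$ for $\Lambda$), at the modest cost of invoking Theorem \ref{Lambda=tildeLambda} to identify $\Lambda$ with $\{x_\alpha:\alpha\in\Sigma\}$. For $(\Lambda.2)$ the two arguments are essentially the same ``flip one digit at depth determined by $r$'' idea, the paper flipping a canonical digit $x_{n-\gamma(x)}$ and you flipping the symbol $\alpha_{n+2}$; your version avoids the paper's case split on $|x|_p$ versus $r$, and both yield the constant $c=1/p$.
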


\begin{proof}
$(\Lambda.1)$ From \eqref{canonic} one can see that each
$x\in\mathbb Z_p$ has the following canonical form
$$
x=p^{\gamma(x)}(x_0+x_1\cdot p+x_2\cdot p^2+\cdots),
$$
where $\gamma(x)\in\mathbb Z_+,\ x_0\neq0$. Let $x\in\Lambda\setminus\{0\}$.

Since $\Lambda=\overline{\Lambda}_{0}$ one has  $x\in\Lambda$ if
and only if there exists a sequence $\{i_k\}_{k\in\mathbb N},\
i_k\in\{1,2\}$ such that
\begin{equation}\label{x-sum}
\left|p^{\gamma(x)}\bigg[\sum_{j=0}^n\sum_{i=0}^jx_ip^j+\sum_{j=1}^\infty\sum_{i=j}^{n+j}x_ip^{n+j}\bigg]-
\sum_{j=1}^{n+1}\delta_{2i_j}p^{j-1}\right|_p<\frac{1}{p^n}
\end{equation}
From \eqref{x-sum} one can see $i_k=1,\ k=\overline{1,\gamma(x)+1}$. So, without loss of generality we may assume that $\gamma(x)=0$.

Let $x\in\Lambda\setminus\{0\}$. Assume that $x_0\neq1$. Then for any sequence $\{i_k\}_{k\in\mathbb N}$ using non-Archimedean norm's property
we get
$$
\left|p^{\gamma(x)}\bigg[\sum_{j=0}^n\sum_{i=0}^jx_ip^j+\sum_{j=1}^\infty\sum_{i=j}^{n+j}x_ip^{n+j}\bigg]-
\sum_{j=1}^{n+1}\delta_{2i_j}p^{j-1}\right|_p=|x_0-\delta_{2i_1}|_p=1.
$$
This contradicts to $x\in\Lambda$. So, we conclude that $x_0=1$ if
$x\in\Lambda\setminus\{0\}$.

Now we will show that $x_i\in\{0,p-1\},\ i=1,2,3,\dots$ For a given sequence $\{i_k\}_{k\in\mathbb N}$ we define
the following sequence
$$
a_n(x)=\left|1+\sum_{j=1}^n\bigg(1+\sum_{i=1}^jx_i\bigg)p^j+\sum_{j=1}^\infty\sum_{i=j}^{n+j}x_ip^{n+j}-\sum_{j=1}^{n+1}\delta_{2i_j}p^{j-1}\right|_p.
$$
Note that if $x_1\not\in\{0,p-1\}$ then for any $\{i_k\}_{k\in\mathbb N}$ holds
$$
a_n(x)=\left|1-\delta_{2i_1}+(1+x_1-\delta_{2i_2})p\right|_p=\left\{\begin{array}{ll}
1, & \mbox{if }i_1=1\\
\frac{1}{p}, & \mbox{if }i_1=2.
\end{array}\right.
$$
This means that for any sequence $\{i_k\}$ one has
$a_n(x)\not\to0$ as $n\to\infty$. This contradicts to
$x\in\Lambda_1$.

Now let us assume that $x_i\in\{0,p-1\},\ i=\overline{1,k}$ and
$x_{k+1}\not\in\{0,p-1\}$. Redenote elements of $\{1,2,\dots,k\}$
by $x_{m_1}=x_{m_2}=\cdots=x_{m_s}=p-1$ and
$x_{m_{s+1}}=x_{m_{s+1}}=\cdots=x_{m_k}=0$. Then we have
\begin{eqnarray}\label{1+sum}
1+\sum_{j=1}^{k+1}\bigg(1+\sum_{i=1}^jx_i\bigg)p^j&=&1+\sum_{j=1}^{m_1-1}p^j+\sum_{j=m_1+1}^{m_2}p^{j}\nonumber\\
&+&(2p-1)\sum_{j=m_2}^{m_3-1}p^j+\cdots+(sp-s+1)\sum_{j=m_s}^{k-1}p^j\nonumber\\
&+&(sp-s+1)p^k+(sp-s+1+x_{k+1})p^{k+1}\nonumber\\
&=&1+\sum_{j\not\in\{m_1,m_2,\dots,m_s\}}^{k-1}p^j+p^k+(1+x_{k+1})p^{k+1}+sp^{k+2}.
\end{eqnarray}
From \eqref{1+sum} with the non-Archimedean norm's property, for
any sequence $\{i_j\}_{j\in\mathbb N}$ one gets
$$
a_n(x)\geq\frac{1}{p^{k+1}}.
$$
This contradicts to $x\in\Lambda_1$.

Let $x\in\mathbb Z_p$ has the following canonical form
$$
x=p^{\gamma(x)}(1+x_1\cdot p+x_2\cdot p^2+\cdots),
$$
where $\gamma(x)\in\mathbb Z_+$ and $x_i\in\{0,p-1\},\ i=1,2,3,\dots$. For any $n\geq1$ redenote elements
of $\{1,2,\dots,n\}$ such that $x_{m_1}=x_{m_2}=\cdots=x_{m_{s_n}}=p-1$
and $x_{m_{s_n+1}}=x_{m_{s_n+1}}=\cdots=x_{m_m}=0$. Pick a sequence $\{i_k\}_{k\in\mathbb N}$ such that
$i_k=1$ if $k\in\{1,2,\dots,\gamma(x)+1\}\cup\{m_1,m_2,\dots,m_{s_n}\}$ or $k\neq n$ and $i_k=2$ otherwise.
Then for this sequence we get $a_n(x)<p^{-n}$ which means that $x\in\Lambda_1$.

$(\Lambda.2)$ We will show that
for any $x\in\Lambda_1$ and $r>0$ there exists $y\in\Lambda_1$ such that
$$
\frac{r}{p}\leq|x-y|_p\leq r.
$$
According to $(\Lambda.1)$ it is enough to check only case $|x|_p>r$. Let us pick a positive integer $n$ such that
$p^{-1}r\leq p^{-n}\leq r$. Note that $\gamma(x)<n$.
Then take $y\in\Lambda_1$ such that $|y|_p=|x|_p$ and $y_i=x_i$ if $i<n-\gamma(x)$ and
$y_{n-\gamma(x)}\neq x_{n-\gamma(x)}$. Thus, we have $|x-y|_p=p^{-n}$.
\end{proof}

Let $ \xi=\left(\begin{array}{ll}
\xi_{11} & \xi_{12}\\
\xi_{21} & \xi_{22}
\end{array}\right)$
be a matrix which elements are $1$ or $2$. For $\xi_{ij}$ we
define an action on $\Sigma=\{1,2\}^{\mathbb N}$ as follows
$$
(\xi_{ij}\ast\alpha)_k=\left\{\begin{array}{ll}
1,&\mbox{if }\ \xi_{ij}+\alpha_k\ \mbox{is odd}\\
2,&\mbox{if }\ \xi_{ij}+\alpha_k\ \mbox{is even}.
\end{array}\right.
$$
For $f_1(x)=px$ and $f_2(x)=px+1-p$ and a given matrix $\xi$ by
$\Lambda^\xi$ we denote the unconventional limiting set of
$\mathcal F_\xi$. We will show that $\Lambda^\xi$ is
quasi-symmetrically equivalent to the symbolic Cantor set
$F^\infty$.

$1)$ Let $
\xi=\left(\begin{array}{ll}
2 & 2\\
2 & 2
\end{array}\right)$. In this case one has
$$
F_{\alpha,n}(x)=2\left(f_{a_1\a_2\dots\a_n}(x)\right)^2.
$$
It follows that
$$
\Lambda_0^\xi=\{2x_0^2: x_0\in\Lambda_0\}.
$$
Hence, $\Lambda^\xi=\{2x^2: x\in\Lambda\}.$

$2)$ Let $
\xi=\left(\begin{array}{ll}
2 & 1\\
2 & 1
\end{array}\right)$. In this case we get
$$
F_{\alpha,n}(x)=2f_{a_1a_2\dots\a_n}(x)\cdot f_{\overline{\a}_1\overline{\a}_2\dots\overline{\a}_n}(x),
$$
where
$$
\overline{\a}_i=\left\{\begin{array}{ll}
1,& \mbox{if }\a_i=2\\
2,& \mbox{if }\a_i=1.
\end{array}\right.
$$
Due to
$x_{\overline{\a}_1\overline{\a}_2\dots\overline{\a}_n}=1-x_{\a_1\a_2\dots\a_n}$,
one finds
$$\Lambda_0^\xi=\{2x_0(1-x_0): x_0\in\Lambda_0\}\ \ \ \mbox{and}\ \ \ \Lambda^\xi=\{2x(1-x): x\in\Lambda\}.$$

$3)$ Let $
\xi=\left(\begin{array}{ll}
2 & 2\\
1 & 1
\end{array}\right)$. In this case, we get
$$\Lambda_0^\xi=\{x_0^2+(1-x_0)^2: x_0\in\Lambda_0\}\ \ \ \mbox{and}\ \ \ \Lambda^\xi=\{x^2+(1-x)^2: x\in\Lambda\}.$$

$4)$ Let $
\xi=\left(\begin{array}{ll}
2 & 1\\
1 & 1
\end{array}\right)$. In this case, we get
$$\Lambda_0^\xi=\Lambda_0\ \ \ \mbox{and}\ \ \ \Lambda^\xi=\Lambda.$$

The uniformly perfectness of the sets $\Lambda^\xi$ immediately
follows from the uniformly perfectness of $\Lambda$. According to
 Theorems \ref{dud} and  \ref{qseC} we conclude that
$\Lambda^\xi$ is quasi-symmetrically equivalent to the symbolic
Cantor set $F^\infty$.


\end{document}